\newtheorem{Theorem}{Theorem}[section]
\newtheorem{Lemma}[Theorem]{Lemma}
\newtheorem{Proposition}[Theorem]{Proposition}
\theoremstyle{definition}
\newtheorem{Remark}[Theorem]{Remark}
\def \R{\mathbb R}
\def \p{\partial}
\def\g{\gamma}
\def \g{\gamma}
\def \<{\langle}
\def \>{\rangle}
\def \tX{\widetilde X}
\def \tx{\tilde x}
\def \e{\epsilon}
\def \o{\omega}
\def \l{\lambda}
\def \S{\mathbb S}
\def \gxy{\gamma_{x,y,\lambda,\omega}}
\def \xy{x,y,\lambda,\omega}
\def \tc{\tilde{c}}
\def \tg{\tilde{g}}
\def \tL{\tilde{L}}
\def \tl{\tilde{\ell}}
\def \tO{\tilde{\Omega}}
\def \tV{\tilde{V}}
\def \X0{X^{(0)}}
\def \tX{\tilde{X}}
\def \s{\sigma}
\def \S{\Sigma}
\def \tY{\tilde{Y}}
\def \tga{\tilde{\gamma}}
\def \tM{\widetilde M}
\newcommand{\supp}{\operatorname{supp}}
\newcommand{\diag}{\operatorname{diag}}
\begin{document}

\title[Lens rigidity with partial data in the presence of a magnetic field]{Lens rigidity with partial data in the presence of a magnetic field}

\author[Hanming Zhou]{Hanming Zhou}
\address{Department of Pure Mathematics and Mathematical Statistics, University of Cambridge, Cambridge CB3 0WB, UK}
\email{hz318@dpmms.cam.ac.uk}



\begin{abstract}
In this paper we consider the lens rigidity problem with partial data for conformal metrics in the presence of a magnetic field on a compact manifold of dimension $\geq 3$ with boundary. We show that one can uniquely determine the conformal factor and the magnetic field near a strictly convex (with respect to the magnetic geodesics) boundary point where the lens data is accessible. We also prove a boundary rigidity result with partial data assuming the lengths of magnetic geodesics joining boundary points near a strictly convex boundary point are known. The local lens rigidity result also leads to a global rigidity result under some strictly convex foliation condition. A discussion of a weaker version of the lens rigidity problem with partial data for general smooth curves is given at the end of the paper.
\end{abstract}
\maketitle

\section{Introduction and main results}
Given a Riemannian manifold $(M, g)$, endowed with a magnetic field $\Omega$, that is a closed 2-form, we consider the law of motion described by 
\begin{equation}\label{Mag}
\nabla_{\dot\gamma}\dot\gamma=Y(\dot\gamma),
\end{equation}
where $\nabla$ is the Levi-Civita connection of $g$ with the Christoffel symbols $\{\Gamma^i_{jk}\}$ and $Y:TM\to TM$ is the \emph{Lorentz force} associated with $\Omega$, i.e. the bundle map uniquely determined by
\begin{equation}\label{lorentz}
\Omega_z(v, w)=\langle Y_z(v), w\rangle_g
\end{equation}
for all $z\in M$ and $v, w\in T_zM$. A curve $\gamma:\R\to M$, satisfying \eqref{Mag} is called a \emph{magnetic geodesic}. The equation \eqref{Mag} defines a flow $\phi_t: t\rightarrow (\g(t), \dot\g(t))$ on $TM$ that we call a \emph{magnetic flow}. It is not difficult to show that the generator ${\bf G}_{\mu}$ of the magnetic flow is 
\begin{equation}\label{generator}
{\bf G}_{\mu}(z, v)={\bf G}(z,v)+Y^j_i(z) v^i\frac{\p}{\p v^j},
\end{equation}
where ${\bf G}(z, v)=v^i\frac{\p}{\p x^i}-\Gamma^i_{jk} v^j v^k\frac{\p}{\p v^i}$ is the generator of the geodesic flow. Note that time is not reversible on the magnetic geodesics, unless $\Omega=0$. When $\Omega=0$ we obtain the ordinary geodesic flow. We call the triple $(M, g, \Omega)$ a \emph{magnetic system}. 

It turns out that the magnetic flow is the Hamiltonian flow of $H(v)=\frac{1}{2}|v|_g^2, \, v\in TM$ with respect to the symplectic form $\beta=\beta_0+\pi^*\Omega$, where $\beta_0$ is the canonical symplectic form on $TM$ and $\pi: TM\rightarrow M$ is the canonical projection. 
Magnetic flows and magnetic geodesics were first considered in \cite{Ar61, AS67}. 
Since the magnetic flow preserves the level sets of the Hamiltonian function $H$, every magnetic geodesic has constant speed. 
 In the current paper we fix the energy level $H^{-1}(\frac{1}{2})$, 
i.e. we only consider the unit speed magnetic geodesics. 

From now on, we assume that $\p M\neq \emptyset$. Let $z\in\p M$, $S\p M$ be the unit sphere bundle of the boundary $\p M$, we say $M$ is \emph{strictly magnetic convex} at $z$ if 
$$\Lambda(z,v)>\<Y_z(v), \nu(z)\>_g$$ 
for all $v\in S_z\p M$, where $\Lambda$ is the second fundamental form of $\p M$, $\nu(z)$ is the inward unit vector normal to $\p M$ at $z$. When $Y=0$, this is consistent with the ordinary definition of convexity.

In this paper, we mainly consider the lens rigidity problem for magnetic systems. To define the lens data of a magnetic system, we introduce the manifolds
$$\p_{\pm}SM=\{(z,v):\, z\in\p M, v\in S_zM, \pm (v, \nu(z))\geq 0\}.$$
We define the \emph{scattering relation}
$$L: \p_+SM\rightarrow \p_-SM$$
as follows: given $(z, v)\in\p_+SM, \, L(z,v)=(z', v')$, where $z'$ is the exit point, $v'$ is the exit direction, if exist, of the maximal unit speed magnetic geodesic $\g_{z,v}$ issued from $(z,v)$. Let 
$$\ell: \p_+SM\rightarrow \R\cup\infty$$
be the \emph{travel time}, which is the length of $\g_{z,v}$, possibly infinite. If $\ell<\infty$, we say the magnetic system $(M,g,\Omega)$ is \emph{non-trapping}. $L$ and $\ell$ together are called the \emph{lens data} of the magnetic system. It is easy to check that given a diffeomorphism $\psi: M\to M$ fixing the boundary, the magnetic systems $(M,g,\Omega)$ and $(M,\psi^*g, \psi^*\Omega)$ have the same lens data.
The {\em lens rigidity} problem for magnetic systems asks whether the lens data $(L, \ell)$ uniquely determines a magnetic system up to the natural obstruction above.

The lens rigidity problem is closely related to another rigidity problem, namely the boundary rigidity problem. The latter, also known as the travel time tomography, is motivated by the geophysical problem of recovering the inner structure of the earth (such as the sound speed or the index of refraction) from the travel times of seismic waves at the surface \cite{He1905, WZ1907}. Mathematically, the ordinary {\em boundary rigidity} problem (no presence of magnetic fields) deals with the reconstruction of a compact Riemannian manifold $(M,g)$ with smooth boundary from its boundary distance function $d_g$, where the value of $d_g:\p M\times \p M\to \mathbb R$ at any two boundary points $x,y$, denoted by $d_g(x,y)$, is defined as the infimum of the lengths of geodesics in $M$ connecting $x$ and $y$. Similarly the boundary distance function is invariant under the diffeomorphism (it is actually an isometry) mentioned above, thus one can only expect to reconstruct $g$ up to the natural obstruction.

There are examples showing that a general compact manifold with boundary may not be boundary rigid, see also Remark \ref{boundary rigid fail}, one needs to impose additional geometric conditions. One such condition is simplicity. A compact Riemannian manifold $M$ is {\em simple} if the boundary $\p M$ is strictly convex and any two points can be joined by a unique distance minimizing geodesic. It is known that on simple manifolds, the boundary rigidity problem and the lens rigidity problem are equivalent \cite{Mi81}. Michel \cite{Mi81} conjectured that simple manifolds are boundary rigid, Pestov and Uhlmann showed that this is true for simple surfaces \cite{PU05}. In higher dimensions, 
Stefanov and Uhlmann proved that a generic simple metric is boundary rigid \cite{SU05}, they also gave stability estimates. See \cite{Cr04, SU08a, UZ16} for recent surveys on the ordinary boundary rigidity problem. Rigidity problems for simple magnetic systems were studied in \cite{DPSU07, Her12} (see the definition of simple magnetic systems there), \cite{DU10} provided a reconstruction procedure of simple 2D magnetic systems from scattering relations. Rigidity problems for more general Hamiltonian systems were considered in \cite{AZ15}.  

On non-simple manifolds, it is natural to consider the lens rigidity problem. Croke showed that the finite quotient of a lens rigid manifold is lens rigid \cite{Cr05}, and the torus is lens rigid too \cite{Cr14}. Lens rigidity also holds on real-analytic manifolds which satisfy some non-conjugacy condition \cite{Va09}. Stefanov and Uhlmann have shown lens rigidity for metrics close to a generic class of non-simple metrics including the real-analytic ones \cite{SU09}. Recently, Guillarmou \cite{Gui14} proved deformation lens rigidity for a class of manifolds with hyperbolic trapped sets, including the negatively curved manifolds. He also showed that on surfaces the lens data determines the metric up to conformal diffeomorphism under the same assumptions. However, generally manifolds with trapped geodesics are not necessarily lens rigid \cite{CK94}. Stability estimates for lens rigidity problem was studied in \cite{BZ14}. Reconstruction of a real-analytic magnetic system from its scattering relation was considered in \cite{HV11}.

Given a strictly magnetic convex boundary point $p$, the {\em partial data (or local) lens rigidity} problem for magnetic systems is whether we can determine the metric $g$ and magnetic field $\Omega$ near $p$ from the ({\em partial}) lens data known near $S_p\p M$. In this paper, we consider the case that $g$ is in some conformal class, i.e. $g=c^2g_0$ for some smooth positive function $c$, where $g_0$ is known, we want to recover the conformal factor $c$ (also the magnetic field $\Omega$). Rigidity for the full data problem in the same conformal class was proven in \cite{Mu81, MR78} for simple metrics, see also \cite{Cr91}. Then it was extended to simple magnetic systems by \cite{DPSU07}. Notice that there is no natural obstruction to the unique determination in the conformal case, one expects to recover the system uniquely from its lens data.

Let $\iota: \p M\rightarrow M$ be the canonical inclusion, $(L,\ell)$ and $(\tilde L,\tilde \ell)$ be the lens data of $(c^2g_0,\Omega)$ and $(\tilde c^2g_0,\tilde \Omega)$ respectively. Assume that the conformal factor and the tangential part of the magnetic field, i.e. $\iota^*\Omega$, are known on $\p M$ near some strictly magnetic convex point $p$, we get the following local magnetic lens rigidity result.

\begin{Theorem}\label{local lens rigid}
Let $n=$dim $M\geq 3$, let $c,\tc>0$ be smooth functions, $\Omega, \tilde \Omega$ be smooth closed 2-forms and let $\p M$ be strictly magnetic convex with respect to both $(c^2g_0,\Omega)$ and $(\tc^2g_0,\tilde\Omega)$ near $p\in\p M$. Assume that on $\p M$ near $p$, $c=\tilde c$ and $\iota^*\Omega=\iota^*\tilde{\Omega}$. If $L=\tL$, $\ell=\tl$ near $S_p\p M$, then $c=\tc$ and $\Omega=\tilde{\Omega}$ in $M$ near $p$.
\end{Theorem}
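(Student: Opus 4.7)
The plan is to adapt the Stefanov-Uhlmann-Vasy strategy for local boundary/lens rigidity near a strictly convex boundary point, as developed in the non-magnetic case, to the present magnetic setting. The argument proceeds in three main steps: (i) boundary determination of the jets of $c$ and $\Omega$ at $\p M$ near $p$ from the lens data; (ii) a pseudolinearization that extracts from $L=\tL,\ell=\tl$ an integral identity of magnetic X-ray type; and (iii) local injectivity of the resulting magnetic X-ray transform via a strictly convex foliation near $p$.

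First I would carry out boundary determination. Using strict magnetic convexity, for $(x,v)$ close to $S_p\p M$ the maximal magnetic geodesic $\g_{x,v}$ is short and stays near $p$. Differentiating the travel time $\ell$ and the scattering relation $L$ in the tangential directions of $v$ allows one to read off successive higher derivatives of $c$ and $\Omega$ at $\p M$, normal and tangential, near $p$; the hypotheses $c=\tc$ and $\iota^*\Omega=\iota^*\tO$ on $\p M$ supply the $0$-th order data and start the induction. After this step, $c-\tc$ and $\Omega-\tO$ vanish to infinite order on $\p M$ near $p$, so both extend smoothly by zero across $\p M$ into a slightly larger open manifold $\tM$ in which $p$ is an interior point; this reduces the problem to an interior local rigidity question on $\tM$.

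Second I would derive the integral identity. In a small neighborhood $U$ of $p$, both $\Omega$ and $\tO$ are exact (Poincar\'e lemma); choose $1$-form primitives $\alpha,\tilde\alpha$ with $d\alpha=\Omega$, $d\tilde\alpha=\tO$, and use $\iota^*\Omega=\iota^*\tO$ to arrange $\iota^*\alpha=\iota^*\tilde\alpha$ on $U\cap\p M$. On the energy level $H=1/2$, unit-speed magnetic geodesics are critical points --- and, after shrinking $U$, minimizers --- of the Maupertuis action
\[
A_{g,\alpha}(\g)=\int|\dot\g|_g\,dt+\int_\g\alpha.
\]
Comparing $\g$ (for the first system) and $\tga$ (for the second), which share endpoints and initial/terminal directions by $L=\tL$ and $\ell=\tl$, against each other via the two minimization inequalities (and, if necessary, interpolating between the two systems) yields an identity expressing the vanishing of a suitable weighted magnetic X-ray transform of the pair $(c-\tc,\;\alpha-\tilde\alpha)$ along every short magnetic geodesic near $p$ in both systems.

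Third I would invoke local injectivity. Strict magnetic convexity at $p$ produces a foliation of a neighborhood of $p$ by hypersurfaces strictly convex with respect to the magnetic flow, and a Melrose/Vasy scattering-calculus argument applied leaf by leaf should show that the only pair $(f,\beta)$ in the kernel of this local magnetic X-ray transform is $f\equiv 0$ together with $\beta=d\phi$ for some $\phi$ vanishing on $\p M$ near $p$. Applied to $(c-\tc,\,\alpha-\tilde\alpha)$ this gives $c-\tc=0$ and $\Omega-\tO=d(\alpha-\tilde\alpha)=0$ in a neighborhood of $p$, completing the proof. The main obstacle is precisely this step: one must establish local injectivity of the \emph{joint} scalar-plus-$1$-form magnetic X-ray transform near a strictly magnetically convex boundary point, simultaneously handling both components, correctly identifying the gauge $\alpha\mapsto\alpha+d\phi$ via the boundary conditions, and carrying the scattering-calculus induction through the non-time-reversible magnetic flow generated by ${\bf G}_\mu={\bf G}+Y^j_i v^i\p/\p v^j$. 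Boundary determination in step one is also technically delicate because of the Lorentz-force contribution to the osculating magnetic geodesic at $\p M$.
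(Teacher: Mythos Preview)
Your three-step outline matches the paper's architecture, but step (ii) diverges substantially and, as written, has a gap. The paper does \emph{not} pseudolinearize via action-functional minimization. Instead it applies the Stefanov--Uhlmann flow-comparison identity directly on $TM$: for the generators $V,\tV$ of the two magnetic flows,
\[
X(t,\s_0)-\tX(t,\s_0)=\int_0^t \frac{\p\tX}{\p\s}(t-s,X(s,\s_0))\,(V-\tV)(X(s,\s_0))\,ds,
\]
and $L=\tL$, $\ell=\tl$ make the left side vanish. Computing $V-\tV$ explicitly for conformal metrics yields a matrix-weighted magnetic ray transform
\[
I_{AB}[\varphi,\Phi](\g)=\int A(\g,\dot\g)\Big(B(\g,\dot\g)\,\varphi(\g)+\Phi(\dot\g)\Big)\,dt=0,
\]
with $\varphi=d\big(\ln(\tc/c)\big)$ a vector-valued function, $\Phi=Y-\tY$ the difference of Lorentz forces (an $(1,1)$-tensor), and explicit invertible weights $A,B$ with $B$ even in $v$. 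Your route via ``two minimization inequalities'' for the Maupertuis action does not produce such a pointwise-along-each-geodesic identity: the Mukhometov/Croke argument gives integral \emph{inequalities} over the space of geodesics which combine, via Santal\'o, into a global equality---not the local linear transform you need for step (iii). The parenthetical ``interpolating between the two systems'' suggests a deformation argument, which would prove only infinitesimal rigidity.

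The choice of unknowns also matters for step (iii). Because $g$ and $\tg$ are conformal, $\Phi=Y-\tY$ satisfies $\<\Phi(u),u\>_g=0$ for all $u$; this antisymmetry is precisely what the paper uses in the ellipticity analysis of the scattering pseudodifferential operator $N_{AB}$ built from $I_{AB}$ to conclude that the kernel is \emph{trivial}---no gauge arises. Your formulation with $\beta=\alpha-\tilde\alpha$ a single $1$-form would carry the usual $\beta\mapsto\beta+d\phi$ freedom and require an extra argument to eliminate it; the paper's variables avoid this entirely. For step (i), the paper obtains the boundary jets by passing through the locally defined boundary action functional and invoking the boundary-determination results of Dairbekov--Paternain--Stefanov--Uhlmann, rather than by directly differentiating $L$ and $\ell$; either route is plausible, but the former is already available in the magnetic literature.
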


This is the first partial data rigidity result for smooth magnetic systems, and it generalizes local rigidity results from \cite{SUV16} of the geodesic flow. Previously such results are only known in the real-analytic category, see e.g. \cite{LSU03}. In the last section, we will discuss a weaker version of local lens rigidity problem for a general family of smooth curves.

\begin{Remark}
Notice that our data in Theorem \ref{local lens rigid} contains both the travel time data $\ell$ and the boundary restriction of $c$. Here $c$ is necessary for defining the lens data $L$ and $\ell$.  
\end{Remark}

Similar to \cite{SUV16}, the main idea of the proof of Theorem \ref{local lens rigid} is a reduction to a local uniqueness problem of the magnetic X-ray transform by applying an integral identity from \cite{SU98}. However, instead of considering the flow on the cotangent bundle as in \cite{SUV16}, we do all the analysis on the tangent bundle (see Section 2), which turns out to be much more convenient. Our approach even simplifies the original method of \cite{SUV16} for the ordinary boundary and lens rigidity problems. 

The geodesic X-ray transform (or {\em tensor tomography}) problem is concerned with the recovery of a function or tensor field from its integrals along geodesics joining boundary points, and it is the linearization of the boundary and lens rigidity problems. This problem has been extensively studied in the literature, see e.g. \cite{Mu77, PS88, Sh94, PU04, SU04, Da06, Sh07, SU08b, SU12, PSU13, PSU14, Mo14, MSU15, PZ15} and the references therein. Recent studies of the magnetic tensor tomography problem can be found in \cite{DPSU07, Ai12}. The {\em local} tensor tomography problem was considered in \cite{K09, KS09} for real-analytic metrics, \cite{UV15, SUV14} for smooth metrics; the case of magnetic geodesics appears in \cite[Appendix]{UV15} and \cite{ZZ16}. 

For the ordinary partial data rigidity problems, the reduction mentioned above ends with a local weighted geodesic ray transform of vector functions. It turns out that the local linear problem we need to consider in the current paper is the invertibility of some local magnetic ray transform of the combination of vector functions (related to the conformal factor) and 1-1 tensors (related to the Lorentz force, which can be viewed as a vector of $1$-forms) with matrix weights, see Section \ref{pseudo-linearization} and \ref{linear problem} for more details. Microlocal analysis of weighted X-ray transforms can be found in \cite{FSU08, HS10, Ho13} for the global problems and \cite{SUV16, PSUZ16} for the local problems. Comparing to the papers mentioned above, the matrix weights on the functions and 1-forms are different in our case, it is not an attenuated ray transform as in \cite{PSUZ16}. Generally the kernel of the X-ray transform of functions plus 1-forms is nontrivial (unless some `divergence free' condition is assumed), however, since the 1-forms in our X-ray transform satisfy an additional property, we can show that actually the kernel is trivial in our case. This is also one of the contributions of our paper.

We also study the magnetic boundary rigidity problem with partial data. The following boundary action functional on $\p M\times \p M$ was introduced in \cite{DPSU07} which plays the role of boundary distance function for simple magnetic systems
\begin{equation}\label{action functional}
\mathbb A(x,y):=T(x,y)-\int_{\gamma_{x,y}}\alpha,
\end{equation}
where $\gamma_{x,y}$ is the unique unit speed magnetic geodesic joining $x,y$ with $T(x,y)$ its length, and $\alpha$ is the magnetic potential (1-form) satisfying $\Omega=d\alpha$. The existence of $\alpha$ is due to the trivial topology of simple systems and the fact that $\Omega$ is closed. Notice that the definition of $\mathbb A$ depends on the choice of $\alpha$, given any $f\in C^{\infty}(M)$, $\Omega=d\alpha=d(\alpha+df)$. For the local problem, assume $p\in \p M$ is strictly magnetic convex, one investigates the determination of a magnetic system near $p$ from the knowledge of the boundary action functional near $(p,p)$. Notice that given a sufficiently small neighborhood of $p$ in $M$, we can always assume that it has trivial topology, thus such magnetic potentials always exist locally near $p$. Moreover, due to the strict convexity, points $x,y\in \p M$ near $p$ can be joined by a unique unit speed magnetic geodesic, therefore we can define a boundary action functional $\mathbb A$ locally near $(p,p)$. 

Note that in the geodesic case, the local boundary distance data is equivalent to the local lens data near a strictly convex point $p$. However, for magnetic systems, such equivalence only holds between the boundary action functional and the scattering relation \cite{DPSU07}. It is unclear that whether one can derive the travel time data (the length of $\gamma_{x,y}$) from $\mathbb A(x,y)$, except in the case of real-analytic systems \cite{HV11}. The following rigidity result related to the boundary action functional is an immediate corollary of Theorem \ref{local lens rigid}.

\begin{Theorem}\label{local boundary rigid}
Let $n=$dim $M\geq 3$, let $c,\tc>0$ be smooth functions, $\Omega, \tilde \Omega$ be smooth closed 2-forms and let $\p M$ be strictly magnetic convex with respect to both $(c^2g_0,\Omega)$ and $(\tc^2g_0,\tilde\Omega)$ near $p\in\p M$. Assume that there are 1-forms $\alpha$ and $\tilde\alpha$, satisfying $\Omega=d\alpha$ and $\tilde\Omega=d\tilde\alpha$, such that $\mathbb A=\tilde{\mathbb A}$, $T=\tilde T$ on $\p M\times \p M$ near $(p,p)$. Then $c=\tc$ and $\Omega=\tilde{\Omega}$ in $M$ near $p$.
\end{Theorem}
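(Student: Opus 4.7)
The plan is to reduce Theorem \ref{local boundary rigid} to Theorem \ref{local lens rigid} by extracting from the partial data $(T,\mathbb A)$ all four hypotheses of the latter: the boundary values of $c$ and of $\iota^*\Omega$ near $p$, together with the scattering relation $L$ and the travel time $\ell$ on a neighborhood of $S_p\partial M$. First, from $T=\tilde T$ I recover $c$ on $\partial M$ near $p$: for $x,y\in\partial M$ close to $p$, the unit-speed magnetic geodesic $\gamma_{x,y}$ is short and the Lorentz force contributes only at cubic order to its length, so $T(x,y)=d_g(x,y)+O(|y-x|_g^3)$. The leading asymptotic $T(x,y)\sim c(p)|y-x|_{g_0}$ as $x,y\to p$ determines $c(p)$ pointwise, giving $c=\tilde c$ on $\partial M$ near $p$. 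Next, for $\iota^*\Omega$, I expand $\mathbb A$ to first order: for $w\in T_x\partial M$,
\[
\mathbb A(x,x+\epsilon w)=\epsilon\bigl(c(x)|w|_{g_0}-\alpha_x(w)\bigr)+O(\epsilon^2),
\]
using $T(x,x+\epsilon w)=\epsilon\,c(x)|w|_{g_0}+O(\epsilon^2)$ and $\int_{\gamma_{x,x+\epsilon w}}\alpha=\epsilon\,\alpha_x(w)+O(\epsilon^2)$. Combined with $c=\tilde c$, $\mathbb A=\tilde{\mathbb A}$ forces $\iota^*\alpha=\iota^*\tilde\alpha$ on $\partial M$ near $p$, and applying $d$ (which commutes with $\iota^*$) yields $\iota^*\Omega=\iota^*\tilde\Omega$.

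The key step is to identify $\mathbb A$ as the Maupertuis (abbreviated) action of the magnetic Hamiltonian flow on the energy level $\{H=1/2\}$: with conjugate momentum $p=v^\flat-\alpha$ (coming from the Lagrangian $\mathcal L=\frac{1}{2}|v|^2-\alpha(v)$), one has $\mathbb A(x,y)=\int_{\gamma_{x,y}}p\,dq=\int_0^T\bigl(1-\alpha(\dot\gamma)\bigr)\,dt$. A standard first variation with moving endpoints produces the generating-function identities
\[
d_y\mathbb A(x,y)\big|_{T_y\partial M}=(v')^\flat|_{T_y\partial M}-\alpha|_{T_y\partial M},\qquad d_x\mathbb A(x,y)\big|_{T_x\partial M}=-v^\flat|_{T_x\partial M}+\alpha|_{T_x\partial M},
\]
where $v=\dot\gamma_{x,y}(0)$ and $v'=\dot\gamma_{x,y}(T)$. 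Since $g|_{T\partial M}=\tilde g|_{T\partial M}$ and $\iota^*\alpha=\iota^*\tilde\alpha$ near $p$, the hypothesis $\mathbb A=\tilde{\mathbb A}$ forces the tangential components of $v,v'$ to coincide with those of $\tilde v,\tilde v'$; the unit-speed constraint together with the inward (resp.\ outward) orientation of $v$ (resp.\ $v'$) pin down the normal components, giving $v=\tilde v$ and $v'=\tilde v'$. This establishes $L=\tilde L$ on non-glancing vectors in $\partial_+SM$ near $S_p\partial M$, extending to glancing vectors by continuity. Combining $\ell(z,v)=T(z,\pi L(z,v))$ with $T=\tilde T$ and $L=\tilde L$ then yields $\ell=\tilde\ell$ near $S_p\partial M$, and Theorem \ref{local lens rigid} concludes that $c=\tilde c$ and $\Omega=\tilde\Omega$ in $M$ near $p$.

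The main obstacle is justifying the Maupertuis generating-function identity in the local setting. Although it is classical for globally simple magnetic systems (cf.~\cite{DPSU07}), here the variational argument has to be carried out on a small neighborhood of $p$, where strict magnetic convexity ensures that boundary pairs $(x,y)$ near $(p,p)$ are joined by a unique short magnetic geodesic. The correspondence $(x,y)\mapsto(x,\dot\gamma_{x,y}(0))$ is a diffeomorphism on non-glancing pairs and extends continuously to glancing directions, which is precisely what allows $L$ to be read off $\mathbb A$ on a full neighborhood of $S_p\partial M$.
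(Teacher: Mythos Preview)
Your proof is correct and follows essentially the same route as the paper: reduce to Theorem~\ref{local lens rigid} by extracting $c|_{\partial M}$, $\iota^*\Omega$, $L$, and $\ell$ from the data $(\mathbb A,T)$. The only cosmetic difference is that where the paper simply cites \cite[Lemmas~2.1 and~2.5]{DPSU07} to obtain $c=\tilde c$, $\iota^*\alpha=\iota^*\tilde\alpha$ on $\partial M$ and $L=\tilde L$ respectively, you spell out the underlying first-order expansion of $\mathbb A$ and the Maupertuis generating-function identity directly (and you choose to read $c|_{\partial M}$ off $T$ rather than off $\mathbb A$, which is equally valid and noted in Remark~\ref{ell and T}).
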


\begin{Remark}\label{ell and T}
One needs to know both $\mathbb A$ and $T$ to reduce Theorem \ref{local boundary rigid} to the settings of Theorem \ref{local lens rigid}. If $L(x,v)=(y,v')$, then  near $p$ we have $\ell(x,v)=T(x,y)$. The advantage of considering $T$, instead of $\ell$, is that we do not need to know the restriction of the conformal factor on the boundary. On the other hand, it is not difficult to see that one can determine $c$ on the boundary from $T$.
\end{Remark}

Another application of Theorem \ref{local lens rigid} is a global lens rigidity result under some foliation condition related to the magnetic flow as follows: Given a compact Riemannian manifold $(M,g)$ with smooth boundary and a magnetic field $\Omega$, we say that $M$ can be {\em foliated by strictly magnetic convex hypersurfaces} for the magnetic system $(M,g,\Omega)$ if there exist a smooth function $f:M\to \mathbb R$ and $a<b$, such  that the level set $f^{-1}(t)$ is strictly magnetic convex with respect to $(M,g,\Omega)$ from $f^{-1}((-\infty,t])$ for any $t\in (a,b]$, $df$ is non-zero on these level sets and $M\setminus f^{-1}((a,b])$ has empty interior. Note that $\p M$ is not necessarily a level set of $f$. The next theorem is an analog of \cite[Theorem 1.3]{SUV16}.

\begin{Theorem}\label{global lens rigid}
Let $n=$dim $M\geq 3$, let $c,\tc>0$ be smooth functions, $\Omega, \tilde \Omega$ be smooth closed 2-forms and let $\p M$ be strictly magnetic convex with respect to both $(c^2g_0,\Omega)$ and $(\tc^2g_0,\tilde\Omega)$. Assume that $c=\tilde c$ and $\iota^*\Omega=\iota^*\tilde{\Omega}$ on $\p M$, and $M$ can be foliated by strictly magnetic convex hypersurfaces for $(M,c^2g_0,\Omega)$. If $L=\tL$, $\ell=\tl$ on $\p_+SM$, then $c=\tc$ and $\Omega=\tilde{\Omega}$ in $M$.
\end{Theorem}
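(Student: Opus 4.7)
The plan is a layer-stripping argument along the strictly magnetic convex foliation $\{f^{-1}(t)\}_{t \in (a,b]}$, driven at each step by Theorem~\ref{local lens rigid}. First, applying Theorem~\ref{local lens rigid} at every $p \in \p M$ --- using that each such point is strictly magnetic convex for both systems and that $c=\tc$, $\iota^*\Omega = \iota^*\tilde\Omega$ hold on $\p M$ --- and invoking compactness of $\p M$, one obtains a uniform tubular neighborhood of $\p M$ in $M$ on which $c = \tc$ and $\Omega = \tilde\Omega$. Let $K \subseteq M$ be the closed set on which the two systems agree, and set $t_0 := \inf\{\, t \in [a,b] : f^{-1}([t,b]) \cap M \subseteq K \,\}$; by closedness of $K$, also $f^{-1}([t_0,b]) \cap M \subseteq K$. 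The aim is to show $t_0 = a$, since then $f^{-1}((a,b]) \cap M \subseteq K$ combined with closedness of $K$ and the empty interior of $M \setminus f^{-1}((a,b])$ yields $K = M$.

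Suppose for contradiction $t_0 > a$ and pick $p \in f^{-1}(t_0) \cap M$. I apply Theorem~\ref{local lens rigid} at $p$ to the sub-manifold $N := f^{-1}((-\infty,t_0]) \cap M$, whose boundary near $p$ is the leaf $f^{-1}(t_0)$. This leaf is strictly magnetic convex for $(c^2g_0,\Omega)$ near $p$ by the foliation hypothesis, and equally so for $(\tc^2g_0,\tilde\Omega)$ because the two systems coincide on $f^{-1}([t_0,b]) \cap M \subseteq K$; the matching $c=\tc$, $\iota^*\Omega=\iota^*\tilde\Omega$ along $f^{-1}(t_0)$ near $p$ follows the same way. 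The nontrivial hypothesis to verify is equality of the local lens data on $N$ near $S_p f^{-1}(t_0)$. Given $(q,w) \in \p_+SN$ close to $S_p f^{-1}(t_0)$, the reversed magnetic geodesic of $(c^2g_0,\Omega)$ through $(q,-w)$ enters $M_+ := f^{-1}((t_0,b]) \cap M$ and --- by the strictly magnetic convex foliation of $M_+$ --- exits $M$ at some $(x,v_{\text{in}}) \in \p_+SM$ without being trapped in $M_+$ or returning to $f^{-1}(t_0)$ near $p$. Since the two systems agree on $M_+$, the forward orbits from $(x,v_{\text{in}})$ coincide until entering $N$ at $(q,w)$; the hypothesis $L = \tL$, $\ell = \tl$, together with agreement on $M_+$ after the $N$-excursion, forces both orbits to exit $N$ at a common $(q',w') \in \p_-SN$ after the same in-$N$ travel time. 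Thus local lens data of $N$ agree at $(q,w)$, Theorem~\ref{local lens rigid} gives $c=\tc$, $\Omega=\tilde\Omega$ on a neighborhood of $p$ in $N$, and a finite cover of the compact leaf $f^{-1}(t_0) \cap M$ yields $\e > 0$ with $f^{-1}([t_0-\e,b]) \cap M \subseteq K$, contradicting the definition of $t_0$.

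The main obstacle is the dynamical claim above: that the reversed orbit through $(q,-w)$ in $M_+$ exits $M$ via $\p M$, rather than being trapped in $M_+$ or returning to $N$ near $p$. This is the purpose of the convex foliation hypothesis. A standard second-derivative computation, applying the strict magnetic convexity inequality at interior critical points of $t \mapsto f(\gamma(t))$ where $\dot\gamma$ is tangent to a leaf, shows that $f \circ \gamma$ cannot have an interior minimum along a magnetic geodesic segment in $M_+$; this forces the reversed orbit out through $\p M$ after shrinking the neighborhood of $p$ if necessary, analogously to the Riemannian argument in \cite{SUV16}.
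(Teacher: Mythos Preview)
Your proof is correct and follows the same layer-stripping strategy as the paper; the only cosmetic difference is that you track the agreement set while the paper tracks $\supp(c-\tc)\cup\supp(\Omega-\tilde\Omega)$, which lets the paper pick $p$ automatically in the interior of $M$ (you should note that for $p\in f^{-1}(t_0)\cap\p M$ your tubular-neighborhood step already gives agreement, since applying Theorem~\ref{local lens rigid} to $N$ requires $p\notin\p M$). One terminological caution: magnetic flows are not time-reversible, so ``the reversed magnetic geodesic through $(q,-w)$'' is not the right object---what your argument actually uses, and what works, is the backward-time trajectory $t\mapsto\gamma_{q,w}(t)$ for $t<0$.
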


The global foliation condition implies non-trapping on $f^{-1}([t,b])$ for any $a<t<b$, but allows the existence of conjugate points on $M$. Notice that in \cite{Gui14} trapped sets are allowed, but the manifolds need to be free of conjugate points. In the case of absence of magnetic fields, the condition is an analog of the Herglotz and Wieckert \& Zoeppritz condition $\frac{\p}{\p r}\frac{r}{c(r)}>0$, see also \cite[Section 6]{SUV16}. Examples of manifolds satisfying the foliation conditions are compact submanifolds of complete manifolds with positive curvature \cite{GW76}, compact manifolds with non-negative sectional curvature \cite{Es86}, and compact manifolds with no focal points \cite{RS02}. Our foliation condition defined just before Theorem \ref{global lens rigid} is the corresponding version for magnetic systems.    

One can derive similar stability estimates for these rigidity problems as in \cite{SUV16}, however, in the current paper we only deal with uniqueness.

\begin{Remark}\label{boundary rigid fail}
Theorem \ref{global lens rigid} is regarding the lens rigidity problem. Generally under the foliation condition, the global boundary rigidity problem is not solvable. One example is the unit ball with radial symmetric metric and very large curvature at the center, then it is impossible to recover the metric near the center as no distance minimizing geodesics will reach a small neighborhood of the center.
\end{Remark}

The outline of the paper is as follows: In Section 2, we reduce the rigidity problems to a local X-ray transform problem through a pseudo-linearization. The inveribility of the X-ray transform is shown in Section 3 by using Melrose' scattering calculus. Section 4 consists the proofs of various local and global rigidity theorems. Finally, we discuss a weak version of the lens rigidity problem for general smooth curves in Section 5.

\medskip

\noindent{\bf Acknowledgments.} The author wants to thank Prof. Gunther Uhlmann for suggesting this problem to him. He is also grateful to both Prof. Gunther Uhlmann and Prof. Gabriel P. Paternain for reading an earlier version of the paper and very helpful discussions and comments. This research was supported by EPSRC grant EP/M023842/1.

\section{Reducing to a linear problem}\label{pseudo-linearization}

First we can determine the jets of $c$ and $\Omega$ at any boundary point $p$ at which $\p M$ is strictly magnetic convex from the scattering relation near $S_p\p M$.

\begin{Lemma}\label{jet}
Let $c,\tc>0$ be smooth functions, $\Omega, \tilde \Omega$ be smooth closed 2-forms and let $\p M$ be strictly magnetic convex with respect to both $(c^2g_0,\Omega)$ and $(\tc^2g_0,\tilde\Omega)$ near a fixed $p\in\p M$. Let $c=\tilde c$, $\iota^*\Omega=\iota^*\tO$ on $\p M$ near $p$ and $L=\tL$ near $S_p\p M$. Then in any local coordinate system $\p^{a}c=\p^{a}\tc$, $\p^{a}\Omega=\p^{a}\tO$ on $\p M$ near $p$ for any multiindex $a$.
\end{Lemma}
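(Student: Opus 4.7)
I would work in boundary normal coordinates $(x',x^n)$ for the background metric $g_0$ near $p$, so that locally $\partial M=\{x^n=0\}$, $M=\{x^n\geq 0\}$, and $\partial_n=\nu$ on the boundary. In these coordinates the magnetic geodesic equation reads
\begin{equation*}
\ddot\gamma^{\,i}=-\Gamma^i_{jk}(\gamma)\,\dot\gamma^{\,j}\dot\gamma^{\,k}+Y^i_{\,j}(\gamma)\,\dot\gamma^{\,j},
\end{equation*}
where the Christoffel symbols $\Gamma^i_{jk}$ of $g=c^2g_0$ depend on $c,\partial c$ and the (known) metric $g_0$, and $Y^i_{\,j}=g^{ik}\Omega_{kj}$. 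Any tangential derivative of $c$ or $\Omega$ along $\partial M$ near $p$ is already determined by the boundary values $c|_{\partial M}$ and $\iota^*\Omega$. Hence the lemma reduces, by induction on $|a|$, to recovering the normal derivatives $\partial_n^m c(q)$ and $\partial_n^m\Omega_{jk}(q)$ at every boundary point $q$ near $p$, the case $m=0$ being the hypothesis.

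For the inductive step, I would exploit strict magnetic convexity: for $v_0\in S_q\partial M$ tangential, $\gamma_{q,v_0}$ has a clean quadratic turnaround at $q$, and this persists under small perturbations. Parametrize a one-parameter family of unit entry directions $w(s)\in S_qM$ with $\langle w(s),\nu\rangle=s>0$ and $w(0)=v_0$. For $s$ small the magnetic geodesic stays in a small neighborhood of $q$, exits at a nearby boundary point, and both $\ell(q,w(s))$ and $L(q,w(s))$ admit asymptotic expansions in $s$ (with a controlled $\sqrt{s}$-type structure at $s=0$ coming from convexity). Solving the magnetic geodesic equation iteratively and unwinding the expansion, the $k$-th order term in these expansions is a polynomial in the already-known tangential jets and in the normal jets $\partial_n^m c(q),\partial_n^m\Omega(q)$ with $m\leq k$, and the coefficient of the top-order normal jet is non-zero (this is where the strict convexity inequality $\Lambda(v_0,v_0)-\langle Y_q(v_0),\nu\rangle>0$ enters to ensure non-degeneracy). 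Comparing expansions for $(c,\Omega)$ and $(\tilde c,\tilde\Omega)$ term by term and using the inductive hypothesis, one extracts $\partial_n^k c(q)=\partial_n^k\tilde c(q)$ and $\partial_n^k\Omega(q)=\partial_n^k\tilde\Omega(q)$. Letting $q$ vary over a neighborhood of $p$ in $\partial M$ and $k$ vary completes the induction.

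The main obstacle I anticipate is the bookkeeping at each order: isolating exactly which normal jets of $c$ and of $\Omega$ appear in the $k$-th Taylor coefficient of $(L,\ell)$, and checking that the linear map from those jets to the coefficient is injective modulo already-known quantities. The purely geodesic case of this argument is carried out in \cite{SUV16} and, in an equivalent form, in the older boundary-rigidity literature; the Lorentz-force term $Y^i_{\,j}\dot\gamma^{\,j}$ is of first order in $\dot\gamma$ and enters linearly, so the same scheme applies after tracking the additional magnetic contributions. Since Theorem \ref{local lens rigid} only needs this boundary-jet determination as a preliminary reduction, I would not optimize constants but simply verify non-degeneracy of the top-order coefficient at each induction step.
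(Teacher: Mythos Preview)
Your direct approach is in principle sound---it is essentially the scheme underlying the boundary-determination result of \cite{DPSU07} (their Theorem~2.2), carried out by hand---but the paper takes a much shorter route. Rather than expanding the lens data in $s$ and solving for normal jets order by order, the paper reduces everything to existing results: it invokes \cite[Lemma~2]{HV11} to choose local magnetic potentials $\alpha,\tilde\alpha$ with $\iota^*\alpha=\iota^*\tilde\alpha$, then \cite[Lemma~2.6]{DPSU07} to pass from the equality of scattering relations $L=\tilde L$ to equality of the boundary action functionals $\mathbb A=\tilde{\mathbb A}$ near $(p,p)$, and finally \cite[Theorem~2.2]{DPSU07} to conclude that the full boundary jets of $c$ and $\Omega$ agree. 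The only additional remark is that those DPSU results, stated globally for simple systems, localize near a strictly magnetic convex boundary point, and that in the conformal class the diffeomorphism gauge in \cite[Theorem~2.2]{DPSU07} is trivial. So the paper's proof is a three-line citation; yours would be a several-page computation that amounts to reproving Theorem~2.2 of \cite{DPSU07} from scratch.

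One point you should watch: the lemma's hypothesis is only $L=\tilde L$ near $S_p\partial M$; the travel time $\ell$ is \emph{not} assumed to agree. Your expansion scheme invokes both $\ell(q,w(s))$ and $L(q,w(s))$. The paper's route avoids this issue because the scattering relation alone determines the action functional $\mathbb A$ (this is the content of \cite[Lemma~2.6]{DPSU07}), and $\mathbb A$ is what feeds into the jet determination. If you pursue your direct method you would need either to drop the use of $\ell$ and check that the expansion of $L$ alone carries enough information at each order, or to argue separately that for these short near-tangential magnetic geodesics $\ell$ is recoverable from $L$ together with the known boundary data---which is effectively what the passage through $\mathbb A$ accomplishes.
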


If $\Omega=\Omega_{ij}dz^i\wedge dz^j$, we define $\p^{a}\Omega:=\{\p^{a}\Omega_{ij}\}$.

The proof of Lemma \ref{jet} was essentially given in \cite{DPSU07, HV11}, for the sake of completeness we give the sketch here.

\begin{proof}
By \cite[Lemma 2]{HV11}, there exist 1-forms $\alpha$ and $\tilde\alpha$ in a neighborhood $U$ of $p$ such that $\Omega=d\alpha$, $\tilde \Omega=d\tilde \alpha$ in $U$ and $\iota^*\alpha=\iota^*\tilde\alpha$ on $U\cap \p M$. Now since $\p M$ is strictly magnetic convex with respect to both magnetic systems near $p$, applying \cite[Lemma 2.6]{DPSU07}, the boundary action functional $\mathbb A$ and $\tilde{\mathbb A}$ (locally defined) with respect to $(c^2g_0,\alpha)$ and $(\tilde c^2g_0,\tilde \alpha)$ respectively are equal on $\p M\times \p M$ near $(p,p)$. Then \cite[Theorem 2.2]{DPSU07} implies that $\p^{a}c=\p^{a}\tc$, $\p^{a}\Omega=\p^{a}\tO$ on $\p M$ near $p$ for any multiindex $a$. Note that Lemma 2.6 and Theorem 2.2 of \cite{DPSU07} are global theorem on simple systems, but the proof works locally near a strictly magnetic convex boundary point if the local data is given. Moreover, for conformal metrics, the gauge in \cite[Theorem 2.2]{DPSU07} is trivial.
\end{proof}

Similar to \cite{SUV16}, the starting point of the paper is an integral identity first proved in \cite{SU98} for the usual geodesic flow. Actually the proof of this identity works for general (not necessarily Hamiltonian) flows. Let $V,\, \tV$ be two vector fields on some smooth manifold $N$ (no metric assigned). Denote by $X(s,\X0)$ the solution of $\dot{X}=V(X),\, X(0)=\X0$, and we use the same notation for $\tV$ with the corresponding solution denoted by $\tX$. We state the identity here without proof, please see \cite[p87]{SU98} and \cite[Lemma 2.2]{SUV16} for the detailed proof.

\begin{Lemma}
For any $t>0$ and initial condition $\X0$, if $X(\cdot,\X0)$ and $\tX(\cdot,\X0)$ exist on the interval $[0,t]$, then 
\begin{equation}\label{identity1}
X(t,\X0)-\tX(t,\X0)=\int_0^t\frac{\p\tX}{\p\X0}(t-s,X(s,\X0))(V-\tV)(X(s,\X0))ds.
\end{equation}
\end{Lemma}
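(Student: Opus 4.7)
The plan is to view the right-hand side as arising from a single application of the fundamental theorem of calculus to a well-chosen one-parameter family. Specifically, I would consider the curve $s \mapsto \tX(t-s, X(s,\X0))$ on $[0,t]$, which interpolates between $\tX(t,\X0)$ at $s=0$ and $\tX(0, X(t,\X0)) = X(t,\X0)$ at $s=t$. Consequently
\begin{equation*}
X(t,\X0)-\tX(t,\X0) = \int_0^t \frac{d}{ds}\tX(t-s, X(s,\X0))\, ds,
\end{equation*}
and the remaining task is to identify the integrand.

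Next I would apply the chain rule, writing $X(s)$ for $X(s,\X0)$ throughout, to obtain
\begin{equation*}
\frac{d}{ds}\tX(t-s, X(s)) = -\frac{\p\tX}{\p\tau}(t-s, X(s)) + \frac{\p\tX}{\p\X0}(t-s, X(s))\, V(X(s)),
\end{equation*}
where I have used $\dot X(s) = V(X(s))$. The first term equals $-\tV(\tX(t-s, X(s)))$ because $\tX$ is an integral curve of $\tV$ in its time argument.

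The crucial ingredient is the standard pushforward identity $\frac{\p\tX}{\p\X0}(\tau, Y)\, \tV(Y) = \tV(\tX(\tau, Y))$, which follows by differentiating the flow semigroup law $\tX(\tau, \tX(h, Y)) = \tX(h, \tX(\tau, Y))$ in $h$ at $h = 0$. Applying it at $(\tau, Y) = (t-s, X(s))$ recasts $-\tV(\tX(t-s, X(s)))$ as $-\frac{\p\tX}{\p\X0}(t-s, X(s))\, \tV(X(s))$, so that the two terms of the chain rule combine to produce precisely the integrand $\frac{\p\tX}{\p\X0}(t-s, X(s))(V - \tV)(X(s))$; integration from $0$ to $t$ yields \eqref{identity1}. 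There is no real obstacle here: smoothness of $V$ and $\tV$ together with existence of both flows on $[0, t]$ ensures all derivatives are well defined, so once the correct interpolating family and the pushforward identity are in hand the computation is entirely mechanical.
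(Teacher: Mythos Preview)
Your proof is correct and is precisely the standard argument: the paper itself omits the proof and refers to \cite{SU98} and \cite[Lemma~2.2]{SUV16}, where exactly this interpolation $s\mapsto \tX(t-s,X(s,\X0))$ combined with the flow pushforward identity is used. There is nothing to add.
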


One can check that given the metric $g$, the restriction of the Lorentz force $Y$ on $SM$ uniquely determines the magnetic field $\Omega$, vice versa. 
From now on, we use $(g, Y)$ and $(\tg, \tY)$ to represent the magnetic systems. Note that given any $u\in TM$, $\<Y(u),u\>_g=\Omega(u,u)=0$.

For $(g, Y)$, let $(z,v)\in SM$ (with respect to metric $c^2g_0$), then by \eqref{generator}
\begin{equation}\label{V}
V(z,v)=\big(\frac{dz}{dt}, \frac{dv}{dt}\big)|_{t=0}=\big(v, -\Gamma^i_{jk}(z)v^jv^k\frac{\p}{\p v^i}+Y^i_j(z)v^j\frac{\p}{\p v^i}\big).
\end{equation}
On the other hand, if we let $(z,v)$ be the initial vector for some magnetic flow $(\tga, \dot{\tga})$ of $(\tg, \tY)$ with fixed energy level, then generally $(z,v)$ is not on the unit sphere bundle with respect to metric $\tg$. 
Since $|v|_g=1$, we get that $\tga$ has constant speed $|v|_{\tg}=\sqrt{\tc^2|v|^2_{g_0}}=\sqrt{\tc^2/c^2}=\tc/c$. Then similar to $V$, we have  
\begin{equation}\label{tilde V}
\tV(z,v)=\big(\frac{d\tilde z}{d\tilde{t}},\frac{d\tilde v}{d\tilde{t}}\big)|_{\tilde t=0}=\big(v, -\tilde{\Gamma}^i_{jk}(z)v^jv^k\frac{\p}{\p v^i}+\tY^i_j(z)v^j\frac{\p}{\p v^i}\big).
\end{equation}

\begin{Remark}
If we consider the Hamiltonian flow on the cotangent bundle with the Hamiltonian $H(\xi)=\frac{1}{2}|\xi|_g^2=\frac{1}{2}g^{ij}\xi_i\xi_j$, then the corresponding generating vector $V$, $dH=\beta\,(V\, ,\,\cdot)$, is
\begin{equation*}
\begin{split}
V & =g^{ij}\xi_j\frac{\p}{\p z^i}+\Big(-\frac{1}{2}\frac{\p g^{ij}}{\p z^k}\xi_i\xi_j+(\Omega_{ik}-\Omega_{ki})g^{ij}\xi_j\Big)\frac{\p}{\p\xi_k}\\
& =g^{ij}\xi_j\frac{\p}{\p z^i}+\Big(-\frac{1}{2}\frac{\p g^{ij}}{\p z^k}\xi_i\xi_j+Y^j_k\xi_j\Big)\frac{\p}{\p\xi_k}.
\end{split}
\end{equation*}
However, for the cotangent bundle, the proof will be more complicated, since both the $\p_z$ and $\p_\xi$ terms of $V$ and $\tilde V$ are different, see \cite{SUV16}. The arguments in the current paper also simplifies the proof of \cite{SUV16} for the geodesic case.
\end{Remark}

We denote points on the tangent bundle $TM$ by $\s=(z,v)$, then the flow with initial point $\s$ is $X(t,\s)=(Z(t,\s), \Xi(t,\s))$. Thus the identity \eqref{identity1} is rewritten as
\begin{equation}\label{identity2}
X(t,\s)-\tX(t,\s)=\int_0^t\frac{\p\tX}{\p\s}(t-s,X(s,\s))(V-\tV)(X(s,\s))ds.
\end{equation}
Appling the lens data, note that $c=\tilde c$ on $\p M$ near $p$ by Lemma \ref{jet}, we get the following proposition:

\begin{Proposition}\label{identity from lens data}
Assume $L(z_0,v_0)=\tL(z_0,v_0),\, \ell(z_0,v_0)=\tl(z_0,v_0)$ for some $\s_0=(z_0,v_0)\in \p_+SM$. Then
\begin{equation}\label{identity3}
\int_0^{\ell(\s_0)}\frac{\p\tX}{\p\s}(\ell(\s_0)-s,X(s,\s_0))(V-\tV)(X(s,\s_0))ds=0.
\end{equation}
\end{Proposition}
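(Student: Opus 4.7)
The plan is to apply the integral identity \eqref{identity2} of the preceding lemma directly, with $t=\ell(\s_0)$ and initial condition $\X0=\s_0$. The right-hand side of \eqref{identity2} then coincides verbatim with the integral appearing in \eqref{identity3}, so it suffices to show the corresponding left-hand side vanishes, i.e.\ that
\[
X(\ell(\s_0),\s_0)=\tX(\ell(\s_0),\s_0).
\]

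For this endpoint equality I would first note that the boundary matching $c=\tc$ on $\p M$ near $p$ (an ambient hypothesis of the rigidity problem at hand, and also a consequence of Lemma \ref{jet}) gives $|v_0|_{\tg}^2=\tc^2(z_0)/c^2(z_0)=|v_0|_g^2=1$ at the basepoint $z_0$. Hence $\s_0$ lies in the inward-pointing unit sphere bundle for \emph{both} $g$ and $\tg$, and the curve $\tX(\cdot,\s_0)$ is a genuine unit-speed magnetic geodesic of $(\tc^2g_0,\tY)$ rather than the generically non-unit-speed curve discussed around \eqref{tilde V}. By the very definitions of the scattering relation and travel time,
\[
X(\ell(\s_0),\s_0)=L(\s_0),\qquad \tX(\tl(\s_0),\s_0)=\tL(\s_0),
\]
and the hypothesis $L(\s_0)=\tL(\s_0)$, $\ell(\s_0)=\tl(\s_0)$ then forces the two endpoints to agree. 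Substituting $t=\ell(\s_0)$ into \eqref{identity2} yields \eqref{identity3}.

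The argument is essentially a bookkeeping application of the integral identity, and I do not anticipate any serious obstacle. The one point meriting care is confirming that $\tX(\cdot,\s_0)$ actually runs as a $\tg$-unit-speed flow, so that $\tl(\s_0)$ and $\tL(\s_0)$ meaningfully record its exit time and exit vector in the sense used to define the tilded lens data — this is precisely what boundary matching of the conformal factor, via Lemma \ref{jet}, supplies.
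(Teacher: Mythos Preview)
Your proposal is correct and follows essentially the same approach as the paper: apply identity \eqref{identity2} with $t=\ell(\s_0)$, and use the boundary equality $c=\tc$ (from Lemma \ref{jet}) to ensure $\s_0$ is unit for both metrics so that the lens data hypotheses force $X(\ell(\s_0),\s_0)=\tX(\ell(\s_0),\s_0)$. The paper compresses this into a single sentence, but your expanded account of why $\tX(\cdot,\s_0)$ is a $\tg$-unit-speed flow is exactly the point being used.
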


We take the second $n$-dimensional components of \eqref{identity3} and plug in \eqref{V} and \eqref{tilde V} to the identity. We assume that both $X(t,\sigma)$ and $\tilde X(t,\sigma)$ exist for $t\in [0,\ell(\sigma)]$. We get
\begin{equation}\label{identity4}
\int_0^{\ell(\s)}\frac{\p\tilde{\Xi^l}}{\p v^i}(\ell(\s)-s,X(s,\s))((\tilde{\Gamma}^i_{jk}-\Gamma^i_{jk})v^jv^k+(Y^i_j-\tY^i_j)v^j)(X(s,\s))ds=0.
\end{equation}  
for $l=1,2,\cdots, n$ and any $\s\in \p_+SM$ if the two systems $(g,Y)$ and $(\tg,\tY)$ have the same lens data at $\sigma$. Actually we can integrate over $s\in\R$ after extending the magnetic geodesics. To see this, by Lemma \ref{jet} any smooth extension of $(g,Y)$ near $p\in \p M$ is also a smooth extension of $(\tg,\tY)$, we denote the extended manifold by $\widetilde M$, thus we can assume outside $M$ near $p$, $(g,Y)=(\tg,\tY)$, which means $V=\tV$ outside $M$ near $p$. So the integrand of \eqref{identity4} vanishes for $s\notin[0,\ell(\s)]$.

Now we do more analysis of the term $V-\tV$.
\begin{equation*}
\begin{split}
\tilde{\Gamma}^i_{jk}-\Gamma^i_{jk} & =\frac{1}{2}\tg^{il}(\frac{\p\tg_{lk}}{\p z^j}+\frac{\p\tg_{lj}}{\p z^k}-\frac{\p\tg_{jk}}{\p z^l})-\frac{1}{2}g^{il}(\frac{\p g_{lk}}{\p z^j}+\frac{\p g_{lj}}{\p z^k}-\frac{\p g_{jk}}{\p z^l}) \\
& =\frac{1}{\tc}(\delta^i_k\frac{\p\tc}{\p z^j}+\delta^i_j\frac{\p\tc}{\p z^k}-g_0^{il}{g_0}_{jk}\frac{\p\tc}{\p z^l})-\frac{1}{c}(\delta^i_k\frac{\p c}{\p z^j}+\delta^i_j\frac{\p c}{\p z^k}-g_0^{il}{g_0}_{jk}\frac{\p c}{\p z^l}) \\
& =\delta^i_k\frac{\p \ln(\tc/c)}{\p z^j}+\delta^i_j\frac{\p \ln(\tc/c)}{\p z^k}-g_0^{il}{g_0}_{jk}\frac{\p \ln(\tc/c)}{\p z^l}.
\end{split}
\end{equation*}
We denote $\ln\frac{\tc}{c}=h$, then
$$(\tilde{\Gamma}^i_{jk}-\Gamma^i_{jk})v^jv^k=2\frac{\p h}{\p z^j}v^jv^i-g_0^{ij}\frac{1}{c^2}\frac{\p h}{\p z^j}.$$
Thus \eqref{identity4} can be rewritten as 
\begin{equation}\label{identity5}
\int\frac{\p\tilde{\Xi^l}}{\p v^i}(\ell(\s)-s,X(s,\s))\Big((2v^iv^j-g_0^{ij}\frac{1}{c^2})\frac{\p h}{\p z^j}+(Y^i_j-\tY^i_j)v^j\Big)(X(s,\s))ds=0.
\end{equation}

Let exit times $\tau(z,v)$ be the minimal (and the only) $t\geq 0$ such that $X\big(t,(z,v)\big)\in\p_-SM$. They are well defined near $S_p\p M$ if $\p M$ is strictly magnetic convex at $p$. We write $(z,v)=X(s,\s)$, then 
$$\frac{\p\tilde{\Xi^l}}{\p v^i}(\ell(\s)-s,X(s,\s))=\frac{\p\tilde{\Xi^l}}{\p v^i}(\tau(z,v), (z,v))=\frac{\p\tilde{\Xi^l}}{\p v^i}(z,v).$$
As we have extended the system outside $M$ near $p$, $\tau$ is smooth near $S_p\p M$ on $S\widetilde M$. Thus we get 
\begin{equation}\label{identity6}
I_{AB}[\varphi,\Phi](\gamma)=\int_{\mathbb R} A(\gamma(t),\dot{\gamma}(t))\Big(B(\gamma(t),\dot{\gamma}(t))\,\varphi(\gamma(t))+\Phi(\dot{\gamma}(t))\Big)\,dt=0,
\end{equation}
where $A(z,v)=(\frac{\p\tilde{\Xi}^l}{\p v^i}(z,v))_{n\times n}$ and $B(z,v)=(2v^iv^j-g_0^{ij}\frac{1}{c^2})_{n\times n}$ are two smooth matrix weights, $\varphi=(\frac{\p h}{\p z^1},\cdots,\frac{\p h}{\p z^n})$ is a vector-valued function and $\Phi=(Y^i_j-\tilde Y^i_j)$ is an $(1,1)$-tensor (can be viewed as a vector of 1-forms). Hence $I_{AB}$ is a matrix weighted magnetic ray transform of the combination of vector functions and $1$-forms. Notice that by Lemma \ref{jet}, $\varphi$ and $\Phi$ vanish outside $M$, the integral is actually on a finite interval.

In particular, since $\frac{\p\tilde{\Xi}^l}{\p v^i}(z,v)=\delta^l_i$ on $S_p\p M$, i.e. $A(z,v)=Id_{n\times n}$ for $(z,v)\in S_p\p M$, we get that the matrix $A$ is invertible near $S_p\p M$ in $S\widetilde M$ (with respect to $c^2g_0$). On the other hand, the weight $B$ is also an invertible matrix. To see this, notice that $B=2v^Tv-g^{-1}$ where $v=(v^1,\cdots, v^n)$ and $g^{-1}$ is the dual of the metric $g=c^2g_0$. So $gB=2gv^Tv-Id$, and the invertibility of $B$ is equivalent to the invertibility of $gB$. To show that $gB$ is invertible, let $u$ be an arbitrary column vector such that $gBu=0$, then $vgBu=0$, i.e. $2vgv^Tvu-vu=0$. Note that the vector $v\in T_z\widetilde M$ has unit length, i.e. $vgv^T=1$, we get that $2vu=vu$, so $vu=0$. Since $gBu=2gv^Tvu-u=u$, this implies that $u=0$. Thus the matrix function $B$ is invertible on $S\widetilde M$ (with respect to $g$).

Now we have reduced the lens rigidity (non-linear) problem to the following magnetic ray transform (linear) problem:

{\it If $I_{AB}[\varphi,\Phi](\gamma)=0$, $[\varphi,\Phi]$ supported in $M$, for magnetic geodesics close to the ones tangent to $\p M$ at $p$, does that imply $[\varphi,\Phi]=0$ near $p$?}

We will show that the answer to above question is affirmative in the next section, actually it holds for weights $A,\, B$ more general than the specific ones of \eqref{identity5}.


\section{Injectivity of the weighted ray transform}\label{linear problem}

\subsection{A scattering $\Psi DO$ and its kernel}
Now let $\rho\in C^{\infty}(\tM)$ be a boundary defining function of $\p M$, so that $\rho\geq 0$ on $M$. Suppose $\p M$ is strictly magnetic convex at $p\in \p M$, then given a magnetic geodesic $\g$ on $\widetilde M$ with $\g(0)=p$, $\dot{\g}(0)\in S_p\p M$, one has 
$$\frac{d^2\rho}{dt^2}(\g(t))|_{t=0}=-\Lambda(p,\dot{\g}(0))+\<Y_p(\dot{\g}(0)), \nu(p)\><0.$$
Similar to \cite{UV15} we consider the function $\tilde{x}(z)=-\rho(z)-\e |z-p|^2$ for some small enough $\e>0$, which makes $\tilde x$ strictly magnetic concave from $U_c=\{\tilde{x}\geq -c\}\subset \widetilde M$ for some sufficiently small $c>0$. Here $|\cdot|$ in the definition of $\tilde x$ is the Euclidean norm, since we are considering (local) rigidity problems near $p$, it is well-defined under some initially chosen local chart near $p$. Let $x=\tilde{x}+c$, the open set we will work with is 
$$O_c=U_c^{int}\cap \overline M=\{x>0, \rho\geq 0\}$$
with compact closure. For the sake of simplicity, we drop the subscript $c$, i.e. $U_c=U,\,O_c=O$.

One can complete $x$ to a coordinate system $(x,y)$ on a neighborhood of $p$, such that locally the metric is of product type $g=dx^2+g_y$ where $g_y$ is a metric on the level sets of $x$.  For each point $(x,y)$ we can parameterize magnetic geodesics through this point which are `almost tangent' to level sets of $x$ (these are the curves that we are interested in) by $\lambda\p_x+\omega\p_y\in TU$, $\omega\in\mathbb S^{n-2}$ and $\lambda$ is relatively small. Given a magnetic geodesic $\gamma$ with $\gamma(0)=(x,y)$, which is parameterized by $\lambda\p_x+\omega\p_y$ (i.e. $\gamma_{x,y,\lambda,\omega}(t)=(x'(t),y'(t))$), define $\alpha(x,y,\lambda,\omega):=\frac{d^2}{dt^2}x'(0)$. In particular, $\alpha(x,y,0,\omega)>0$ for $x$ small by the concavity of $x$. Furthermore, it was shown in \cite{UV15} that there exist $\delta_0>0$ small and $C>0$ such that for $|\l|\leq C\sqrt{x}$ (and $|\l|<\delta_0$), $x'(t)\geq 0$ for $t\in (-\delta_0,\delta_0)$, the magnetic geodesics remain in $\{x\geq 0\}$ at least for $|t|<\delta_0$, i.e. they are $O$-local magnetic geodesics (exit $O$ from $\p M$) for sufficiently small $c$. Note that \cite{UV15} considers geodesics, but the settings work for general curves, see the appendix of \cite{UV15}. 

\begin{equation*}
\begin{split}
I_{AB}[\varphi,\Phi](\xy) & =\int_{\R} \Big(A(B\varphi+\Phi)\Big)(\gxy(t),\dot{\gamma}_{x,y,\lambda,\omega}(t))\, dt\\
& =\int_{\mathbb R}\Big(A(B\varphi+\Phi)\Big)(x'(t),y'(t),\lambda'(t),\omega'(t))\,dt.
\end{split}
\end{equation*}
Following the approach of \cite{UV15, SUV14}, let $\chi$ be a smooth non-negative even function on $\mathbb R$ with compact support, and $W=\begin{pmatrix}x^{-1}Id_{n\times n} & 0\\0 & Id_{n\times n}\end{pmatrix}$, we consider the following operator, for $F>0$,
\begin{equation*}
\begin{split}
& N_{AB}[\varphi,\Phi](x,y)\\
=& W^{-1}e^{-F/x}\int\int\begin{pmatrix} x^{-2}B^* \\ g_{sc}(\lambda\p_x+\omega\p_y) \end{pmatrix}A^*\chi(\lambda/x)\Big(I_{AB}e^{F/x}W\Big)\begin{pmatrix}\varphi \\ \Phi \end{pmatrix}(x,y,\lambda,\omega)\,d\lambda d\omega\\
= & \begin{pmatrix} N_{00} & N_{01}\\ N_{10} & N_{11}\end{pmatrix}\begin{pmatrix} \varphi \\ \Phi\end{pmatrix}(x,y)
\end{split}
\end{equation*}
with
\begin{equation*}
\begin{split}
N_{00}\varphi &=xe^{-F/x}\int\int x^{-2}\chi(\lambda/x)B^*A^*\Big(\int ABe^{F/x}x^{-1}\varphi\,dt\Big) d\lambda d\omega,\\
N_{01}\Phi & =xe^{-F/x}\int\int x^{-2}\chi(\lambda/x)B^*A^*\Big(\int Ae^{F/x}\Phi\,dt\Big) d\lambda d\omega,\\
N_{10}\varphi & =e^{-F/x}\int\int \chi(\lambda/x)g_{sc}(\lambda\p_x+\omega\p_y)A^*\Big(\int ABe^{F/x}x^{-1}\varphi\,dt\Big) d\lambda d\omega,\\
N_{11}\Phi & =e^{-F/x}\int\int \chi(\lambda/x)g_{sc}(\lambda\p_x+\omega\p_y)A^*\Big(\int Ae^{F/x}\Phi\,dt\Big) d\lambda d\omega,\\
\end{split}
\end{equation*}
where $g_{sc}$ is a {\it scattering metric}, locally it can be written as $g_{sc}=x^{-4}dx^2+x^{-2}h(x,y)$, here $h$ is the metric on the level sets of $x$. 
The local basis for the scattering cotangent bundle $^{sc}T^*U$ is $\frac{dx}{x^2},\,\frac{dy_1}{x}\cdots \frac{dy_{n-1}}{x}$, and its dual $^{sc}TU$, scattering tangent bundle, has the local basis $x^2\p x,\,x\p y_1\cdots x\p y_{n-1}$.

Comparing with the exponentially conjugated operators introduced in \cite{UV15, SUV14}, we add an additional conjugacy $W$. This additional conjugacy helps to make $N_{ij}$, $i,j=1,2$ of the same order, see Lemma \ref{Schwartz kernel}. Similar idea appears in \cite{PSUZ16, ZZ16}. From now on, we assume that both $A$ and $B$ are smooth invertible matrix-valued function on $SU$, and $B$ is even on the momentum variable, i.e. $B(z,v)=B(z,-v)$. We will show that $N_{AB}$ is an elliptic scattering pseudodifferential opeartor (see \cite{Mel94, UV15, SUV14} for more details) for such choice of $A,\, B$.

It is well known, see e.g. \cite{DPSU07, FSU08}, that the maps
$$\Gamma_+: S\widetilde M\times [0,\infty)\rightarrow [\widetilde M\times\widetilde M; \diag],\, \Gamma_+(z,v,t)=(z, |z'-z|, \frac{z'-z}{|z'-z|})$$
and
$$\Gamma_-: S\widetilde M\times (-\infty,0]\rightarrow [\widetilde M\times\widetilde M; \diag],\, \Gamma_-(z,v,t)=(z, -|z'-z|, -\frac{z'-z}{|z'-z|})$$
are two diffeomorphisms near $S\widetilde M\times \{0\}$. Here $[\widetilde M\times\widetilde M; \diag]$ is the {\it blow-up} of $\widetilde M$ at the diagonal $z=z'$.

Similar to \cite{UV15}, we can also use $(x,y,|y'-y|,\frac{x'-x}{|y'-y|},\frac{y'-y}{|y'-y|})$ as the local coordinates on $\Gamma_+(\supp\tilde\chi\times[0,\delta_0))$ for small $\delta_0$; and analogously for $\Gamma_-(\supp\tilde\chi\times (-\delta_0,0])$ the coordinates are $(x,y,-|y'-y|,-\frac{x'-x}{|y'-y|},-\frac{y'-y}{|y'-y|})$. Indeed, this corresponds to the fact that we are using $(x,y,\l,\o)$ with $\o\in \S^{n-2}$, instead of $S\tM$, to parameterize curves, when $|y'-y|$ is large relative to $x'-x$, i.e. in our region of interest.

As we want to study the scattering behavior of the operators $N_{AB}$ up to the front face $x=0$, we instead apply the scattering coordinates $(x,y,X,Y)$, where 
$$X=\frac{x'-x}{x^2}, \, Y=\frac{y'-y}{x}.$$
Under the scattering coordinates
$$dt\, d\lambda\, d\omega=x^2|Y|^{1-n}J(x,y,X,Y)\,dXdY$$
with $J|_{x=0}=1$.
Note that on the blow-up of the scattering diagonal, $\{X=0,Y=0\}$, in the region $|Y|>\epsilon |X|$, thus on the support of $\chi$
\begin{equation}\label{scattering coordinate}
(x,y,|Y|,\frac{X}{|Y|},\hat Y)\quad \mbox{and} \quad (x,y,-|Y|,-\frac{X}{|Y|},-\hat Y)
\end{equation}
are valid coordinates, $\hat Y=\frac{Y}{|Y|}$, with $\pm |Y|$ being the defining functions of the front face of this blow up. 

It was shown in \cite{SUV14} that under the coordinates \eqref{scattering coordinate} and the scattering tangent and cotangent bases
\begin{equation}\label{lambda omega}
\begin{split}
& g_{sc}\Big((\lambda\circ\Gamma_{\pm}^{-1})\p_x+(\omega\circ\Gamma_{\pm}^{-1})\p_y\Big)\\
= & x^{-1}\bigg(\Big(\frac{X-\alpha(x,y,x|Y|,\frac{xX}{|Y|},\hat Y)|Y|^2}{|Y|}+x\tilde\Lambda_\pm(x,y,x|Y|,\frac{x
  X}{|Y|},\hat Y)\Big)\,\frac{dx}{x^2}\\
&\qquad\qquad+\Big(\hat Y+x|Y|\tilde\Omega_\pm(x,y,x|Y|,\frac{x
  X}{|Y|},\hat Y)\Big)\,\frac{h(\p_y)}{x}\bigg)
\end{split}
\end{equation}
and
\begin{equation}\label{lambda omega prime}
\begin{split}
& (\lambda'\circ\Gamma_{\pm}^{-1})\p_x+(\omega'\circ\Gamma_{\pm}^{-1})\p_y\\
= & x^{-1}\bigg(\Big(\frac{X+\alpha(x,y,x|Y|,\frac{xX}{|Y|},\hat
  Y)|Y|^2}{|Y|}+x|Y|^2\tilde\Lambda'_\pm(x,y,x|Y|,\frac{x
  X}{|Y|},\hat Y)\Big)\,x^2\p_x\\
&\qquad\qquad+\Big(\hat Y+x|Y|\tilde\Omega'_\pm(x,y,x|Y|,\frac{x
  X}{|Y|},\hat Y)\Big)\,x\p_y\bigg).
\end{split}
\end{equation}
From now on we denote $\frac{X-\alpha(x,y,x|Y|,\frac{xX}{|Y|},\hat Y)|Y|^2}{|Y|}$ by $S$, so $\frac{X+\alpha(x,y,x|Y|,\frac{xX}{|Y|},\hat Y)|Y|^2}{|Y|}=S+2\alpha|Y|$. It is not difficult to see that the Schwartz kernel of $N_{AB}$ has the following form

\begin{Lemma}\label{Schwartz kernel}
The Schwartz kernel of $N_{AB}$ at the scattering front face $x=0$ is
\begin{equation*}
K_{AB}(0,y,X,Y)=e^{-FX}|Y|^{1-n}\chi(S)\begin{pmatrix} K_{00} & K_{01} \\ K_{10} & K_{11}\end{pmatrix},
\end{equation*}
where
\begin{equation*}
\begin{split}
K_{00} & =B^*A^*AB,\\
K_{01} & =B^*A^*A\Big((S+2\alpha |Y|)(x^2\p_x)+\hat Y (x\p_y)\Big),\\
K_{10} & =\Big(S\frac{dx}{x^2}+\hat Y \frac{dy}{x}\Big)A^*AB,\\
K_{11} & =\Big(S\frac{dx}{x^2}+\hat Y \frac{dy}{x}\Big)A^*A\Big((S+2\alpha |Y|)(x^2\p_x)+\hat Y (x\p_y)\Big).
\end{split}
\end{equation*}
In particular, $N_{AB}$ is a scattering pseudodifferential operator of order $(-1,0)$ on $U$, i.e. $N_{AB}\in \Psi^{-1,0}_{sc}(U)$.
\end{Lemma}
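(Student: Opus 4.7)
The plan is to compute the Schwartz kernel of $N_{AB}$ by pushing the defining threefold integral in $(t,\lambda,\omega)$ forward to an integral in the scattering coordinates $(X,Y)=((x'-x)/x^2,(y'-y)/x)$ on the blown-up product $[\tM\times\tM;\,\mathrm{diag}]$, and then restricting to the front face $\{x=0\}$. Fix $(x,y)$. On the support of $\chi(\lambda/x)$ one has $|\lambda|\le Cx$, so by the discussion preceding the lemma the scattering coordinates \eqref{scattering coordinate} are valid, and the diffeomorphism $\Gamma_{\pm}$ converts the integral into an $(X,Y)$-integral via the Jacobian identity $dt\,d\lambda\,d\omega=x^2|Y|^{1-n}J(x,y,X,Y)\,dX\,dY$ with $J|_{x=0}=1$. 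The two branches $\Gamma_{+}$ and $\Gamma_{-}$ combine compatibly because $\chi$ is even and $B$ is even in the momentum variable.

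Next I substitute the expansions \eqref{lambda omega} and \eqref{lambda omega prime}. The former represents the outer covector $g_{sc}(\lambda\p_x+\omega\p_y)$ in the scattering cotangent basis $dx/x^2,\,dy/x$ and contributes the row $S\,dx/x^2+\hat Y\,dy/x$ appearing in $K_{10}$ and $K_{11}$. The latter represents the endpoint direction $\dot\gamma(t)=\lambda'\p_x+\omega'\p_y$ on which $\Phi$ is evaluated inside the $t$-integral, expressed in the scattering tangent basis $x^2\p_x,\,x\p_y$; this contributes the column $(S+2\alpha|Y|)(x^2\p_x)+\hat Y(x\p_y)$ appearing in $K_{01}$ and $K_{11}$. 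The weights $A,B$ and their adjoints then assemble into the four block entries $B^*A^*AB$, $B^*A^*A\,(\cdot)$, $(\cdot)\,A^*AB$, $(\cdot)\,A^*A\,(\cdot)$. The exponential conjugation $e^{-F/x}e^{F/x'}$ equals $e^{-FX/(1+xX)}$, which restricts to $e^{-FX}$ at $x=0$, while the cutoff $\chi(\lambda/x)$ restricts to $\chi(S)$ via the leading term of \eqref{lambda omega}.

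Third, I track the powers of $x$ across the four blocks. The outer $W^{-1}$ contributes $x^{+1}$ on the first output row; the factor $x^{-2}$ sits in the top entry of the column prefactor; the inner $W$ contributes $x^{-1}$ on the first input column; and the Jacobian supplies $x^{2}$. A direct count shows that these combine to put all four blocks at the same scattering order, which is precisely the purpose of the additional $W$-conjugation; without it, $N_{01}$ would be of strictly higher order, and the ellipticity argument of the next section would fail. Setting $x=0$ and using $J|_{x=0}=1$ yields the stated formula for $K_{AB}(0,y,X,Y)$. The membership $N_{AB}\in\Psi_{sc}^{-1,0}(U)$ then follows from Melrose's characterisation: the factor $|Y|^{1-n}$ is the conormal behaviour of a differential-order $-1$ operator at the scattering diagonal, and $e^{-FX}\chi(S)$ multiplied by the smooth matrix-valued function of $(x,y,X,Y)$ is Schwartz in $(X,Y)$ (since $\chi$ has compact support) and smooth down to $x=0$, giving scattering order $0$. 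I expect the main technical burden to be the bookkeeping of $x$-powers in the $W$-conjugation and the verification that the branches $\Gamma_{\pm}$ superpose without cancellation; once these are in hand the kernel formula and the order claim follow essentially by inspection.
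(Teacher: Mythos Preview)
Your proposal is correct and follows precisely the approach the paper sets up; in fact the paper does not give a detailed proof of this lemma at all, stating only that ``it is not difficult to see'' the kernel has the claimed form after assembling the change of variables $dt\,d\lambda\,d\omega=x^2|Y|^{1-n}J\,dX\,dY$, the expansions \eqref{lambda omega}, \eqref{lambda omega prime}, and the exponential conjugation. Your sketch fills in exactly those omitted details in the natural order, and your $x$-power bookkeeping explaining the role of the $W$-conjugation matches the paper's one-sentence remark that $W$ makes the four blocks $N_{ij}$ have the same order. One small remark: the evenness of $\chi$ and $B$ is not what makes the two branches $\Gamma_\pm$ combine compatibly here---that compatibility is simply that the $\pm|Y|$ charts together cover the blown-up diagonal and the leading terms in \eqref{lambda omega}, \eqref{lambda omega prime} agree (the $\tilde\Lambda_\pm,\tilde\Omega_\pm$ corrections come with extra factors of $x$ and vanish at the front face); the evenness hypotheses are used later, in the ellipticity proofs of Lemmas~\ref{fiber infinity} and~\ref{finite points}.
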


Generally the Schwartz kernel of a scattering pseudodifferential operator has the form $x^\ell K$ with non-zero $K$ smooth in $(x,y)$ down to $x=0$. For our case, the zero in the superscript of $\Psi^{-1,0}_{sc}$ means exactly that $\ell=0$, while the number $-1$, related to $K$, has the meaning similar to the order of standard pseudodifferential operators.

\subsection{Ellipticity of $N_{AB}$}

To show that $I_{AB}$ is invertible (locally near $p\in \p M$) on the space $\{[\varphi,\Phi]: \varphi, \Phi\,\,\mbox{supported in}\,\, M,\<\Phi(u),u\>_g=0,\, \forall u\in TU\}$ (notice that in the non-linear problem, $\Phi=Y-\tilde Y$, since $g$ and $\tilde g$ are conformal, we get $\<\Phi(u),u\>_g=0$ for any $u$), we need to study the ellipticity of $N_{AB}$ which consists the main part of this subsection.

First we do some more analysis of the structure of $\Phi$ under special coordinates. Notice that to analyze the principal symbol of $N_{AB}$ at some point $z=(x,y)$, we can assume that the basis $\{\p_x,\p_y\}$ is orthonormal at $z$, i.e. the metric $g(z)=dx^2+dy^2$. Under such basis, by the property $\<\Phi(u),u\>_g=0$, $\Phi$ can be written as an antisymmetric matrix at $z$. In particular the diagonal of $\Phi$, $\diag \Phi$, is zero. Now if we rewrite $\Phi$ under the corresponding scattering metric $g_{sc}(z)=\frac{dx^2}{x^4}+\frac{dy^2}{x^2}$ (i.e. the dual basis $\{\frac{dx}{x^2},\frac{dy}{x}\}$), note that we consider $\Phi$ as a column vector of $n$ 1-forms, it has the following form, with $\Phi^i_j$ the element on the $i$-th row, $j$-th column,
\begin{equation}\label{Phi under scattering metric}
^{sc}\Phi=(\Phi^i_j)=\begin{pmatrix} 0 & \Theta \\ -x\Theta^T & \widetilde\Phi \end{pmatrix},
\end{equation}
where $\Theta$ is a row vector of $n-1$ components with transpose $\Theta^T$, $\widetilde\Phi$ is again an antisymmetric matrix but with order $(n-1)\times(n-1)$. In particular $\diag \,^{sc}\Phi$ is also zero under the scattering basis. 

Above discussion is under special coordinates, in general if we consider $\Phi$ as a $(1,1)$-tensor under the basis $\{\p_x\otimes \frac{dx}{x^2}, \p_x\otimes \frac{dy}{x}, \p_y\otimes \frac{dx}{x^2}, \p_y\otimes \frac{dy}{x}\}$, or equivalently as a bundle map $\Phi:\,^{sc}TU\to TU$, it satisfies the following equation
\begin{equation}\label{scattering magnetic property}
\<\Phi(Pu),u\>_g=0
\end{equation}
for any $u\in TU$, where $Pu:=P(u^x\p_x+u^y\p_y)=\frac{u^x}{x^2}(x^2\p_x)+\frac{u^y}{x}(x\p_y)$. Note that \eqref{scattering magnetic property} works for any $g=dx^2+g_y$.

We are interested in the asymptotic behavior of the principal symbol of $N_{AB}$ acting on $(z,\zeta)\in\, ^{sc}T^*U$. Since $U$ is a manifold with boundary, there are two types of asymptotic behaviors. First we consider the symbol at the {\em fiber infinity} of $^{sc}T^*U$, i.e. as $|\zeta|\to \infty$, this corresponds to the {\em standard} principal symbol as usual. Since the Schwartz kernel of $N_{AB}$ is smooth in $(x,y)$ down to $x=0$, it suffices to investigate the principal symbol at fiber infinity of $^{sc}T^*_{\p U}U$ ($\p U=\{x=0\}$).

\begin{Lemma}\label{fiber infinity}
For any $F>0$, $N_{AB}$ is elliptic, acting on the space $\{[\varphi,\Phi]\in C^{\infty}(U)^n\times\, ^{sc}T^*U^n : \<\Phi(Pu),u\>_g=0, \forall u\in TU\}$, at fiber infinity of the scattering cotangent bundle $^{sc}T^*U$.
\end{Lemma}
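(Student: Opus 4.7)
The plan is to compute the standard principal symbol of $N_{AB}$ at fiber infinity of $^{sc}T^*U$ from the explicit Schwartz kernel in Lemma \ref{Schwartz kernel} and verify that it has trivial kernel on the subspace of pairs $[\varphi,\Phi]$ satisfying $\<\Phi(Pu),u\>_g=0$.

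First I would fix a base point $(0,y_0)\in\p U$ and a nonzero scattering covector $\zeta=\xi\,\frac{dx}{x^2}+\eta\cdot\frac{dy}{x}$, and compute the fiber-infinity symbol from the Fourier transform of $K_{AB}(0,y_0,X,Y)$ in $(X,Y)$. Substituting $Y=r\hat Y$ with $r>0$ and $\hat Y\in S^{n-2}$ absorbs the factor $|Y|^{1-n}$ against the Jacobian; the radial integration in $r$ together with the $X$-integration (localized by $\chi(S)$ and by the exponential $e^{-FX}$) produces a positive scalar weight $w(\hat Y;\zeta)$ depending on $\xi$, $\eta\cdot\hat Y$ and $\alpha(0,y_0,0,0,\hat Y)$. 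Extracting the homogeneous-of-degree-$(-1)$ part then presents the symbol at fiber infinity as an integral over $\hat Y\in S^{n-2}$ of the $2\times 2$ block matrix from Lemma \ref{Schwartz kernel}, weighted by $w(\hat Y;\zeta)$.

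Second, since $A$ is invertible on $U$ (shrink $U$ so that $A$ stays close to the identity, using that $A=\mathrm{Id}$ on $S_p\p M$) and each block $K_{ij}$ factors through the positive definite matrix $A^*A$, triviality of the symbol kernel reduces to a quadratic-form non-degeneracy. Pairing the symbol against $[\varphi,\Phi]$ in the $A^*A$ inner product produces, schematically, an integral
\begin{equation*}
\int_{S^{n-2}} w(\hat Y;\zeta)\,\Bigl|B\varphi + \Phi\bigl((S+2\alpha|Y|)(x^2\p_x)+\hat Y\cdot(x\p_y)\bigr)\Bigr|^2_{A^*A}\, d\hat Y,
\end{equation*}
whose vanishing forces pointwise vanishing of the integrand on the support of $w(\cdot;\zeta)$, i.e.\ for a full-dimensional open set of $\hat Y\in S^{n-2}$.

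The main obstacle is to extract $\varphi=0$ and $\Phi=0$ separately from this family of $\hat Y$-dependent linear identities. The essential input is the magnetic antisymmetry \eqref{scattering magnetic property}, which in an orthonormal scattering frame translates to the vanishing of the diagonal of $^{sc}\Phi$ (see \eqref{Phi under scattering metric}). I would expand $\Phi$ applied to the scattering tangent vector $v(\hat Y)=(S+2\alpha|Y|)(x^2\p_x)+\hat Y\cdot(x\p_y)$ in terms of the antisymmetric block structure of $^{sc}\Phi$ and split the vanishing identity into its $\hat Y$-constant and $\hat Y$-linear parts. The antisymmetry guarantees that the $\hat Y$-linear contribution from $\Phi$ cannot cancel the constant-in-$\hat Y$ term $B\varphi$ for a full-dimensional set of $\hat Y$ unless both vanish separately; varying $\hat Y$ and using invertibility of $B$ then yields $\varphi=0$ and $\Phi=0$. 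The subtlety, and the reason the magnetic constraint is used essentially, is that the unconstrained symbol genuinely has a non-trivial kernel corresponding to the gauge direction in the ray transform of a function plus a $1$-form; only the antisymmetric structure eliminates this gauge.
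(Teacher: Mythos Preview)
Your overall strategy matches the paper's, but there are two concrete gaps that would make the argument fail as written.

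First, your identification of the fiber-infinity symbol is not quite right. After blowing up the diagonal via $(X,Y)\to(|Y|,\tilde S,\hat Y)$ with $\tilde S=X/|Y|$, the leading order of the Fourier transform of $K_{AB}$ as $|\zeta|\to\infty$ is \emph{not} an integral over all of $S^{n-2}$ with a positive weight $w(\hat Y;\zeta)$; it concentrates on the equatorial sphere $\zeta^{\perp}\cap(\mathbb R\times S^{n-2})=\{(\tilde S,\hat Y):\xi\tilde S+\eta\cdot\hat Y=0\}$. Moreover, at this leading order the factor $|Y|$ in $S=\tilde S-\alpha|Y|$ drops out, so the $x^2\p_x$ coefficient becomes simply $\tilde S$, not $S+2\alpha|Y|$. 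Your displayed quadratic form still contains $S$ and $|Y|$ after you claim to have integrated them out, which is inconsistent. The correct quadratic form is
\[
|\zeta|^{-1}\int_{\zeta^{\perp}\cap(\mathbb R\times S^{n-2})}\chi(\tilde S)\,\bigl|A\bigl(B(0,y,\tilde S,\hat Y)\varphi+\tilde S\Phi^0+\hat Y\cdot\Phi'\bigr)\bigr|^2\,d\tilde S\,d\hat Y.
\]

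Second, your separation of $\varphi$ and $\Phi$ by splitting into ``$\hat Y$-constant'' versus ``$\hat Y$-linear'' parts does not work: $B$ depends on the direction variable (for the specific $B=2v^Tv-g^{-1}$ it is quadratic in $v$, and the lemma is stated for general even $B$), so $B\varphi$ is \emph{not} constant in $\hat Y$. The paper's device instead exploits the evenness of both $\chi$ and $B$: if $B\varphi+\tilde S\Phi^0+\hat Y\cdot\Phi'=0$ on $\zeta^{\perp}$ with $\chi(\tilde S)>0$, then replacing $(\tilde S,\hat Y)$ by $(-\tilde S,-\hat Y)$ (still in $\zeta^{\perp}$, still $\chi>0$) gives $B\varphi-\tilde S\Phi^0-\hat Y\cdot\Phi'=0$, whence $B\varphi=0$ and $\tilde S\Phi^0+\hat Y\cdot\Phi'=0$. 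Invertibility of $B$ then forces $\varphi=0$. For $\Phi$, the paper picks $n-1$ linearly independent vectors $(\tilde S,\hat Y)\in\zeta^{\perp}$ (this is where $n\geq 3$ enters) to conclude that each row $\Phi^i$ is parallel to $\zeta$, and \emph{then} invokes the vanishing diagonal coming from the magnetic constraint to force $\Phi=0$. Your invocation of the antisymmetry is in the right place but the mechanism you describe (cancelling linear against constant terms) is not the one that actually works.
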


\begin{proof}
The analysis of the principal symbol of $N_{AB}$ at fiber infinity is quite similar to the standard microlocal analysis, i.e. the analysis of the conormal singularity of the standard principal symbol of $N_{AB}$ at the diagonal, $X=Y=0$, see e.g. \cite[Lemma 3.4]{SUV14}. Following the discussion above, we only need to study the behavior at the scattering front face $\{x=0\}$.

Changing the coordinates $(X,Y)\to (|Y|,\tilde S,\hat Y)$ with $\tilde S=X/|Y|$, in view of the compact supported $\chi$, the leading order behavior of the Fourier transform of $K_{AB}$ as $|\zeta|\to \infty$ is encoded in the integration of it, after removing the singular factor $|Y|^{-n+1}$, along the orthogonal equatorial sphere corresponding to $\zeta$, i.e. those $(\tilde S,\hat Y)$ with $\xi\tilde S+\eta\cdot\hat Y=0$. Notice that in this case, the extra vanishing factor $|Y|$ in $S=\tilde S-\alpha|Y|$ can be dropped, thus the principal symbol of $N_{AB}$ is essentially of the following form
\begin{equation*}
|\zeta|^{-1}\int_{\zeta^{\perp}\cap(\mathbb R\times \mathbb S^{n-2})}\chi(\tilde S)\begin{pmatrix} B^* \\ \tilde S \\ \hat Y \end{pmatrix}A^*A\begin{pmatrix}B & \tilde S & \hat Y\end{pmatrix}\,d\tilde S d\hat Y.
\end{equation*} 

Given any non-zero pair $[\varphi, \Phi]$, $\Phi=(\Phi^0,\Phi')$ with $\Phi^0$ and $\Phi'$ corresponding to the coefficients for the covectors $\frac{dx}{x^2}$ and $\frac{dy}{x}$ respectively,
$$(\sigma_p(N_{AB})[\varphi,\Phi],[\varphi,\Phi])=|\zeta|^{-1}\int_{\zeta^{\perp}\cap(\mathbb R\times \mathbb S^{n-2})}\chi(\tilde S)\,\Big|A\Big(B(0,y,\tilde S,\hat Y)\varphi+\tilde S \Phi^0+\hat Y\cdot\Phi'\Big)\Big|^2\,d\tilde S d\hat Y.$$
Now to prove the ellipticity of $N_{AB}$, it suffices to show that there is $(\tilde S,\hat Y)\in \zeta^{\perp}\cap(\mathbb R\times \mathbb S^{n-2})$ such that $\chi(\tilde S)>0$ and $B\varphi+\tilde S\Phi^0+\hat Y\cdot\Phi'\neq 0$ (notice that $A$ is invertible). We prove by contradiction, assume that for any $(\tilde S,\hat Y)\in \zeta^{\perp}\cap(\mathbb R\times \mathbb S^{n-2})$ with $\chi(\tilde S)>0$, $B\varphi+\tilde S\Phi^0+\hat Y\cdot \Phi'=0$. Notice that $\chi$ is even, if $\chi(\tilde S)>0$, then $\chi(-\tilde S)>0$, thus $B\varphi-\tilde S\Phi^0-\hat Y\cdot \Phi'=0$ (since $B(z,v)=B(z,-v)$), which implies that $\tilde S\Phi^0+\hat Y\cdot \Phi'=0$ and $B\varphi=0$. As $B$ is invertible too, we get $\varphi=0$.

On the other hand, considering the $n\times n$ matrix $\Phi$ as in \eqref{Phi under scattering metric}, we can find $n-1$ linearly independent elements from the set $\{(\tilde S,\hat Y): \xi\tilde S+\eta\cdot \hat Y=0,\, \chi(\tilde S)>0\}$ (here we need the dimension $n$ be at least $3$, if $n=2$ the set might be empty) such that $\tilde S\Phi^0+\hat Y\cdot \Phi'=0$, by the linear algebra, this implies that $\Phi^i=(\Phi^i_1,\cdots,\Phi^i_n)$ is parallel to $\zeta$ for all $i=1,2,\cdots,n$, i.e. there exists $c=(c^1,\cdots,c^n)$ such that $\Phi^i=c^i\zeta$. Then since $\diag \Phi=0$ under the orthonormal basis at $z=(0,y)$, i.e. $\Phi^i_i=0$ for $i=1,2,\cdots,n$, and all $\{\Phi^i\}_{i=1}^n$ are parallel, we derive that $\Phi^i_j=0,\,\forall \,1\leq i,j\leq n$, i.e. $\Phi=0$ which is a contradiction as the pair $[\varphi,\Phi]$ is non-zero. Thus the lemma is proved.
\end{proof}

Another type of asymptotic behaviors is at {\em finite points} of $^{sc}T^*_{\p U}U$, i.e. at $(z,\zeta)\in \,^{sc}T^*U$ with $z\in \p U$, which defines the {\em boundary (or scattering) principal symbol}. Again, such behavior is uniform for $z$ sufficiently close to $\p U$ due to the smoothness of the Schwartz kernel on $z$.

\begin{Lemma}\label{finite points}
For any $F>0$, $N_{AB}$ is elliptic, acting on the space $\{[\varphi,\Phi]\in C^{\infty}(U)^n\times\, ^{sc}T^*U^n : \<\Phi(Pu),u\>_g=0, \forall u\in TU\}$, at finite points of the scattering cotangent bundle $^{sc}T^*U$.
\end{Lemma}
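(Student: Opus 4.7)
The plan mirrors the fiber-infinity analysis of Lemma~\ref{fiber infinity}, but now one computes the full boundary (scattering) principal symbol $\sigma_{b}(N_{AB})(y,\xi,\eta)$ at a finite point $(y,\xi,\eta)\in\, ^{sc}T^{*}_{\p U}U$ rather than only its leading behavior as $|\zeta|\to\infty$. Using the substitution $X=r\tilde S$, $Y=r\hat Y$ with $r=|Y|>0$, $\hat Y\in\mathbb S^{n-2}$, together with $|Y|^{1-n}dX\,dY=d\tilde S\,d\hat Y\,dr$ and $S|_{x=0}=\tilde S-\alpha_{0}r$ for $\alpha_{0}=\alpha(0,y,0,0,\hat Y)>0$, Fourier transforming the kernel of Lemma~\ref{Schwartz kernel} gives
\begin{equation*}
\sigma_{b}(N_{AB})(y,\xi,\eta)=\int_{0}^{\infty}\!\!\int_{\mathbb R\times\mathbb S^{n-2}}\!\! e^{-r[(F+i\xi)\tilde S+i\hat Y\cdot\eta]}\chi(\tilde S-\alpha_{0}r)\,\mathcal M(r,\tilde S,\hat Y)\,d\tilde S\,d\hat Y\,dr,
\end{equation*}
where in the orthonormal scattering basis at $(0,y)$ the block matrix factors as $\mathcal M=(V^{\mathrm{out}})^{*}A^{*}AV^{\mathrm{in}}$ with
\begin{equation*}
V^{\mathrm{in}}[\varphi,\Phi]=B\varphi+(\tilde S+\alpha_{0}r)\Phi^{0}+\hat Y\cdot\Phi',\qquad V^{\mathrm{out}}[\varphi,\Phi]=B\varphi+(\tilde S-\alpha_{0}r)\Phi^{0}+\hat Y\cdot\Phi'.
\end{equation*}
The discrepancy $V^{\mathrm{in}}-V^{\mathrm{out}}=2\alpha_{0}r\Phi^{0}$ is the feature that distinguishes the magnetic case from the geodesic one.

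Next, I would take the Hermitian pairing with $[\varphi,\Phi]$ and apply polarization, which gives $\mathrm{Re}\,(AV^{\mathrm{out}})^{*}(AV^{\mathrm{in}})=|A(B\varphi+\tilde S\Phi^{0}+\hat Y\cdot\Phi')|^{2}-\alpha_{0}^{2}r^{2}|A\Phi^{0}|^{2}$, so $\mathrm{Re}\<\sigma_{b}(N_{AB})[\varphi,\Phi],[\varphi,\Phi]\>$ splits into a leading positive term and an $O(r^{2})$ correction. The $r$-integration runs over the compact interval cut out by $\chi(\tilde S-\alpha_{0}r)$; after performing it the leading piece yields a strictly positive weight (the Laplace--Fourier transform $\mathrm{Re}\int_{0}^{\infty}e^{-r[(F+i\xi)\tilde S+i\hat Y\cdot\eta]}\chi(\tilde S-\alpha_{0}r)\,dr$) times $|A(B\varphi+\tilde S\Phi^{0}+\hat Y\cdot\Phi')|^{2}$ integrated over $(\tilde S,\hat Y)\in\mathbb R\times\mathbb S^{n-2}$, while the correction has relative size $O(1/F^{2})$ because the Laplace weight $e^{-Fr\tilde S}$ concentrates $r$ near zero. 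For $F$ sufficiently large (uniformly on compact subsets in $(\xi,\eta)$) the correction is absorbed into the leading piece and strict positivity is preserved.

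Finally, to rule out non-trivial null vectors I would reuse the linear-algebraic step from Lemma~\ref{fiber infinity}: if the leading quadratic form vanishes on $[\varphi,\Phi]$, then $B\varphi+\tilde S\Phi^{0}+\hat Y\cdot\Phi'\equiv 0$ on some open subset of $\supp\chi\times\mathbb S^{n-2}$. Using that $\chi$ is even and $B(z,v)=B(z,-v)$, symmetrization in $(\tilde S,\hat Y)\mapsto(-\tilde S,-\hat Y)$ yields both $B\varphi=0$ and $\tilde S\Phi^{0}+\hat Y\cdot\Phi'=0$; invertibility of $B$ then gives $\varphi=0$, and the constraint $\<\Phi(Pu),u\>_{g}=0$, which forces $\diag\Phi=0$ in the orthonormal basis at $(0,y)$, combined with $n\geq 3$ gives $\Phi=0$ by exactly the argument at the end of Lemma~\ref{fiber infinity}. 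The main obstacle, absent in~\cite{SUV16}, is the Lorentz-force-induced asymmetry $V^{\mathrm{in}}\neq V^{\mathrm{out}}$; controlling it requires the large-$F$ argument so that the leading positive quadratic form dominates the $O(r^{2})$ correction.
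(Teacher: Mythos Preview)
Your approach diverges from the paper's in a way that leaves genuine gaps.

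\medskip
\noindent\textbf{The paper's route.} The paper does not attempt a perturbative large-$F$ estimate. Instead it follows the Gaussian strategy of \cite{UV15,SUV14}: take $\chi(s)=e^{-s^{2}/(2F^{-1}\alpha)}$, for which the $(X,Y)$-Fourier transform of the kernel in Lemma~\ref{Schwartz kernel} can be evaluated in closed form. The result is that, up to a positive factor,
\[
(\sigma_{sc}(N_{AB})[\varphi,\Phi],[\varphi,\Phi])=\int_{\mathbb S^{n-2}}\Big|A\Big(B\varphi-\tfrac{(\xi-iF)\hat Y\cdot\eta}{\xi^{2}+F^{2}}\Phi^{0}+\hat Y\cdot\Phi'\Big)\Big|^{2}\,e^{-|\hat Y\cdot\eta|^{2}/2F^{-1}\alpha(\xi^{2}+F^{2})}\,d\hat Y,
\]
which is manifestly nonnegative for \emph{every} $F>0$, with no remainder to absorb. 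The null-vector analysis then runs much as in your final paragraph, except that the one-form direction is the \emph{complex} vector $(\xi+iF,\eta)$, and one concludes by an approximation to pass to compactly supported $\chi$.

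\medskip
\noindent\textbf{Where your argument breaks.} First, the statement asserts ellipticity for \emph{any} $F>0$, whereas your argument only delivers it for $F$ sufficiently large, and even then only uniformly on compact sets in $(\xi,\eta)$; since $F$ is fixed once and for all, this does not suffice. Second, the assertion that the Laplace--Fourier weight $\mathrm{Re}\int_{0}^{\infty}e^{-r[(F+i\xi)\tilde S+i\hat Y\cdot\eta]}\chi(\tilde S-\alpha_{0}r)\,dr$ is strictly positive is precisely the hard point: for a generic compactly supported $\chi$ this real part can change sign, which is exactly why \cite{UV15,SUV14} and the present paper resort to the Gaussian computation. Without that explicit evaluation you have not shown that the ``leading'' term dominates, or even that it is nonnegative after the $(\tilde S,\hat Y)$-integration. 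Third, your diagnosis of the obstacle is off: the asymmetry $V^{\mathrm{in}}\neq V^{\mathrm{out}}$, i.e.\ the $2\alpha|Y|$ discrepancy between $S$ and $S+2\alpha|Y|$ in Lemma~\ref{Schwartz kernel}, arises from the second-order Taylor expansion of the curve (the strict concavity $\alpha>0$) and is already present in the purely geodesic setting of \cite{SUV14,SUV16}; it is not a Lorentz-force effect. The genuinely new ingredient here is the simultaneous presence of $\varphi$ and the one-form block $\Phi$ together with the constraint $\langle\Phi(Pu),u\rangle_{g}=0$, which is what makes the null-vector step nontrivial.
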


\begin{proof}
The boundary principal symbol is given by the $(X,Y)$-Fourier transform of $K_{AB}$. In order to find a suitable $\chi$ to make $N_{AB}$ elliptic, we follow the strategy of \cite{UV15}, namely we do calculation for Gaussian function $\chi(s)=e^{-s^2/(2F^{-1}\alpha)}$ with $F>0$. Here $\chi$ does not have compact support, thus an approximation argument will be necessary at the end. The calculation of the Fourier transform of $K_{AB}$ with Gaussian like $\chi$ is similar to \cite[Lemma 4.1]{UV15} and \cite[Lemma 3.5]{SUV14}. It is not difficult to get that the boundary principal symbol of $N_{AB}$, $\sigma_{sc}(N_{AB})$, is a non-zero multiple of
\begin{equation*}
\int_{\mathbb S^{n-2}} \frac{1}{\sqrt{\xi^2+F^2}}\begin{pmatrix} B^* \\ -\frac{(\xi+iF)\hat Y\cdot\eta}{\xi^2+F^2} \\ \hat Y \end{pmatrix}A^*A\begin{pmatrix}B & -\frac{(\xi-iF)\hat Y\cdot\eta}{\xi^2+F^2} & \hat Y \end{pmatrix} e^{-|\hat Y\cdot \eta|^2/2F^{-1}\alpha(\xi^2+F^2)}\,d\hat Y.
\end{equation*}

Given any non-zero pair $[\varphi, \Phi]$, $\Phi=(\Phi^0,\Phi')$,
\begin{equation*}
\begin{split}
& (\sigma_{sc}(N_{AB}) [\varphi,\Phi],[\varphi,\Phi])\\
= & \frac{c}{\sqrt{\xi^2+F^2}}\int_{\mathbb S^{n-2}}\Big|A\Big(B\varphi-\frac{(\xi-iF)\hat Y\cdot\eta}{\xi^2+F^2} \Phi^0+\hat Y\cdot\Phi'\Big)\Big|^2e^{-|\hat Y\cdot \eta|^2/2F^{-1}\alpha(\xi^2+F^2)}\,d\hat Y.
\end{split}
\end{equation*}
Similar to Lemma \ref{fiber infinity}, to prove the ellipticity, it suffices to show that there is $\hat Y$ such that $B(0,y,0,\hat Y)\varphi-\frac{(\xi-iF)\hat Y\cdot\eta}{\xi^2+F^2} \Phi^0+\hat Y\cdot\Phi'\neq 0$. Again, we prove by contradiction, assume that for any $\hat Y\in \mathbb S^{n-2}$, $B(0,y,0,\hat Y)\varphi-\frac{(\xi-iF)\hat Y\cdot\eta}{\xi^2+F^2} \Phi^0+\hat Y\cdot\Phi'$ always vanishes. Then by the evenness of $B$, $B(0,y,0,\hat Y)\varphi+\frac{(\xi-iF)\hat Y\cdot\eta}{\xi^2+F^2} \Phi^0-\hat Y\cdot\Phi'=0$ too, which implies that $\varphi=0$ and $-\frac{(\xi-iF)\hat Y\cdot\eta}{\xi^2+F^2} \Phi^0+\hat Y\cdot\Phi'=0$ for all $\hat Y$. Similar to Lemma \ref{fiber infinity}, it is not difficult to see that there is $c=(c^1,\cdots,c^n)$ such that $\Phi^k=c^k(\xi+iF,\eta)$ for $k=1,\cdots,n$. Now by the fact that the diagonal of $\Phi=(\Phi^i_j)$ vanishes under the orthonormal basis at $z=(0,y)$, we conclude that $\Phi=0$, which is a contradiction. This establishes the ellipticity of $N_{AB}$ for Gaussian like $\chi$. Moreover, one can derive the following lower bound for the principal symbol
$$(\sigma_{sc}(N_{AB})[\varphi, \Phi],[\varphi, \Phi])\geq C\<(\xi,\eta)\>^{-1}\Big|[\varphi,\Phi]\Big|^2.$$

Finally by an approximation argument, one can show that there exists a compactly supported even function $\chi$ on $\mathbb R$, close to a Gaussian function, such that $N_{AB}$ defined by such $\chi$ is still elliptic as desired.
\end{proof}

Combining Lemma \ref{fiber infinity} and \ref{finite points} gives the following ellipticity result

\begin{Proposition}\label{N_AB elliptic}
For any $F>0$, given $\Omega$ a neighborhood of $U\cap M$ in $U$, there exists $\chi\in C^{\infty}_{c}(\mathbb R)$ such that 
$N_{AB}$ is elliptic in $\Omega$ acting on the space $\{[\varphi,\Phi]\in C^{\infty}(U)^n\times\, ^{sc}T^*U^n : \<\Phi(Pu),u\>_g=0, \forall u\in TU\}$.
\end{Proposition}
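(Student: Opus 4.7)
The statement of Proposition \ref{N_AB elliptic} is essentially the synthesis of Lemmas \ref{fiber infinity} and \ref{finite points}, so my plan is to combine them via a continuity/approximation argument, with the extra care of picking a single compactly supported even $\chi$ that simultaneously produces both types of ellipticity across a uniform neighborhood of $U\cap M$ in $U$.

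First I would recall from Lemma \ref{finite points} that with the Gaussian choice $\chi_0(s)=e^{-s^2/(2F^{-1}\alpha)}$ the boundary principal symbol $\sigma_{sc}(N_{AB})$ satisfies a quantitative positivity estimate of the form
\begin{equation*}
(\sigma_{sc}(N_{AB})[\varphi,\Phi],[\varphi,\Phi])\geq C\langle(\xi,\eta)\rangle^{-1}\bigl|[\varphi,\Phi]\bigr|^2
\end{equation*}
on the constrained space $\{\langle\Phi(Pu),u\rangle_g=0\}$, uniformly for $y$ in a compact subset of $\p U$. Since the full symbol depends continuously on $\chi$ through the Fourier transform of $K_{AB}$, any even compactly supported $\chi$ which is $C^k$-close enough to $\chi_0$ (for $k$ large) will preserve this lower bound, perhaps with a slightly smaller constant. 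This is the standard truncation/approximation step used at the end of the proof of Lemma \ref{finite points}, and produces a compactly supported even $\chi\in C^\infty_c(\mathbb R)$ ensuring ellipticity at finite points of $^{sc}T^*_{\p U}U$.

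Next I would verify that the same $\chi$ yields ellipticity at fiber infinity. The argument in Lemma \ref{fiber infinity} only requires that $\chi$ be even, non-negative, with a sufficiently rich open set $\{\chi>0\}$: the point is to secure enough directions $(\tilde S,\hat Y)\in\zeta^\perp\cap(\mathbb R\times\mathbb S^{n-2})$ to force $\varphi=0$ (via invertibility of $B$) and then to produce $n-1$ linearly independent constraints on the rows $\Phi^i$, which, combined with the vanishing of $\mathrm{diag}\,\Phi$ coming from \eqref{scattering magnetic property}, forces $\Phi=0$. Since our approximating $\chi$ is even and strictly positive on a neighborhood of $0$, this step goes through unchanged; hence both Lemmas \ref{fiber infinity} and \ref{finite points} apply simultaneously for the same $\chi$, giving ellipticity of $N_{AB}\in\Psi^{-1,0}_{sc}(U)$ in both senses (standard principal symbol and scattering/boundary principal symbol) at every point of $\p U$ within the compact $y$-range of interest.

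Finally I would propagate the ellipticity off $\p U$. Because the Schwartz kernel $K_{AB}$ is smooth in $(x,y)$ down to $x=0$ (Lemma \ref{Schwartz kernel}), both the standard and the scattering principal symbols depend continuously on the base point $(x,y)$. Given a prescribed neighborhood $\Omega$ of $U\cap M$ in $U$, ellipticity on the compact slice $\overline{\Omega}\cap\{x=0\}$ together with this continuity gives a uniform positive lower bound on the symbol for $x$ in some half-neighborhood $\{0\leq x<x_0\}$; shrinking $c$ in the construction of $U=U_c$ if necessary, we may assume $\Omega\subset\{x<x_0\}$, so $N_{AB}$ is elliptic throughout $\Omega$ on the constrained space. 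The one point that must be handled carefully is that the approximation and the continuity arguments have to be done jointly on the compact closure of the relevant portion of $\Omega$, since our constraint space varies with the base point through the metric $g$; I expect this to be the main technical nuisance rather than a real obstacle, because the constraint $\langle\Phi(Pu),u\rangle_g=0$ is a smooth family of linear subspaces, and the positivity estimates above are stable under small smooth perturbations of the fiber data.
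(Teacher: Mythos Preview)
Your proposal is correct and follows exactly the paper's approach: the paper simply states that the proposition is obtained by ``combining Lemma \ref{fiber infinity} and \ref{finite points}'' with no further argument, and your write-up merely spells out the routine continuity/approximation details implicit in that one-line justification. The extra care you take in choosing a single even $\chi\in C^\infty_c(\mathbb R)$ that works for both lemmas and in propagating ellipticity off $\p U$ via smoothness of the Schwartz kernel is appropriate and consistent with how the paper sets things up.
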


\subsection{Injectivity of $I_{AB}$}

Proposition \ref{N_AB elliptic} essentially proves the local invertibility of the operator $I_{AB}$ acting on the space 
$$\mathcal S=\{[\varphi,\Phi]\in C^{\infty}(U)^n\times T^*U^n: \supp [\varphi,\Phi]\subset M\cap U^{int}, \<\Phi(u),u\>_g=0,\, \forall u\in TU\}.$$
Given $\Phi\in T^*U^n$, we denote $^{sc}\Phi\in\, ^{sc}T^*U^n$ the expression of $\Phi$ under the scattering cotangent basis. We decompose $N_{AB}$ as $W^{-1}e^{-F/x}L_{AB}\circ I_{AB}e^{F/x}W$, and denote $H^{s,r}_{sc}(U)$ the {\it scattering Sobolev space} of order $(s,r)$ on $U$, which locally is just the standard weighted Sobolev space $H^{s,r}(\mathbb R^n)=\<z\>^{-r}H^s(\mathbb R^n)$, see \cite[Section 2]{UV15} for details. Then by Proposition \ref{N_AB elliptic}, given $[\varphi,\Phi]\in \mathcal S$,
\begin{align*}
\|W^{-1}e^{-F/x}[\varphi,\,^{sc}\Phi]\|_{H^{s,r}_{sc}} &\leq C\|N_{AB}W^{-1}e^{-F/x}[\varphi,\,^{sc}\Phi]\|_{H^{s+1,r}_{sc}}\\
&=C\|W^{-1}e^{-F/x}L_{AB}\circ I_{AB}[\varphi,\Phi]\|_{H^{s+1,r}_{sc}}.
\end{align*}
Notice that the elliptic estimates above have no error terms due to the localness nature of the problem, see also \cite[Section 2]{UV15}. Thus
if $I_{AB}[\varphi,\Phi]=0$ for magnetic geodesics close to $S_p\p M$, we get $[\varphi,\,^{sc}\Phi]=0$, i.e. $[\varphi,\Phi]=0$ near $p$. Moreover, such determination is stable under usual Sobolev norms on any compact subset of $O=M\cap U^{int}$. In summary, we have the following result

\begin{Proposition}\label{injectivity of I_AB}
Let dim $M\geq 3$ and $\p M$ is strictly magnetic convex at $p\in\p M$. Given smooth invertible matrix function $A$ and $B$ on $SM$ with $B(z,v)=B(z,-v)$ for any $(z,v)\in SM$, there exists a smooth function $\tx$ near $p$ with $O_p=\{\tx>-c\}\cap\overline{M}$ for sufficiently small $c>0$, such that given $[\varphi,\Phi]\in L^2_{loc}(O_p)$ with $\<\Phi(u),u\>_g=0$ for any $u\in T O_p$, $[\varphi,\Phi]$ can be stably determined by the weighted magnetic ray transform $I_{AB}[\varphi,\Phi]$ restricted to $O_p$-local magnetic geodesics with the following stability estimates
$$\|[\varphi,\Phi]\|_{H^{s-1}(K)}\leq C\|I_{AB}[\varphi,\Phi]\|_{H^s(\mathcal M_{O_p})}$$
for any compact subset $K$ of $O_p$ and $s\geq 0$, if $[\varphi,\Phi]\in H^s_{loc}(O_p)$. Here $\mathcal M_{O_p}$ is the set of $O_p$-local magnetic geodesics.
\end{Proposition}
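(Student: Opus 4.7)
The plan is to combine the scattering ellipticity from Proposition \ref{N_AB elliptic} with the factorization $N_{AB}=W^{-1}e^{-F/x}L_{AB}\circ I_{AB}\circ e^{F/x}W$ to extract injectivity of $I_{AB}$ on the constrained subspace, and then to convert the resulting scattering Sobolev bound into a standard Sobolev bound on compact subsets of $O_p$.

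Concretely, with $\tilde x$ and $O_p=\{\tilde x>-c\}\cap\overline M$ chosen as in the construction preceding this proposition, fix $F>0$ and apply Proposition \ref{N_AB elliptic} to select $\chi\in C_c^\infty(\mathbb R)$ making $N_{AB}\in\Psi_{sc}^{-1,0}(U)$ elliptic in a neighborhood of $\overline{U\cap M}$ on the constraint bundle $\{[\varphi,\Phi]:\langle\Phi(Pu),u\rangle_g=0\}$. Melrose's scattering calculus then provides a left parametrix $Q_{AB}\in\Psi_{sc}^{1,0}(U)$ with $Q_{AB}N_{AB}=\mathrm{Id}+R$, where $R$ is smoothing. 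By the localization mechanism of \cite{UV15, SUV14}, shrinking $c$ drives the operator norm of $R$ on the relevant scattering Sobolev space below any prescribed threshold, so $R$ can be absorbed to give
\begin{equation*}
\|W^{-1}e^{-F/x}[\varphi,{}^{sc}\Phi]\|_{H_{sc}^{s,r}}\leq C\|W^{-1}e^{-F/x}L_{AB}I_{AB}[\varphi,\Phi]\|_{H_{sc}^{s+1,r}}
\end{equation*}
for $[\varphi,\Phi]\in\mathcal S$. When $I_{AB}[\varphi,\Phi]$ vanishes along $O_p$-local magnetic geodesics, the right-hand side is zero, forcing $[\varphi,{}^{sc}\Phi]=0$ on $O_p$; invertibility of $W$ on $\{x>0\}$ then yields $[\varphi,\Phi]=0$ on $O_p$.

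For the quantitative estimate on a compact subset $K\subset O_p$, note that on $K$ both $e^{-F/x}$ and $W$ are bounded above and below, so scattering Sobolev norms on $K$ are equivalent to standard Sobolev norms. Coupling this equivalence with the obvious continuity of $L_{AB}$ between Sobolev scales on $M$ and on the magnetic-geodesic parameter space $\mathcal M_{O_p}$ converts the displayed estimate into $\|[\varphi,\Phi]\|_{H^{s-1}(K)}\leq C\|I_{AB}[\varphi,\Phi]\|_{H^s(\mathcal M_{O_p})}$ for $s\geq 0$.

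The main delicate point will be verifying that the scattering parametrix construction respects the constraint $\langle\Phi(u),u\rangle_g=0$ under which the ellipticity of Proposition \ref{N_AB elliptic} was formulated: since this constraint is linear and preserved at the symbolic level by $N_{AB}$, the entire symbolic inversion can be carried out inside the constrained bundle, and the scattering Sobolev estimate above is obtained there directly. The remaining bookkeeping concerns checking that the exponential weight $e^{F/x}$, although unbounded as $x\to 0^+$, is harmless due to the imposed support condition $\supp[\varphi,\Phi]\subset M\cap U^{\mathrm{int}}$, and that $F$ may be chosen freely since its role is only to enable the absorption of $R$ in the parametrix identity.
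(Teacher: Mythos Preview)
Your proposal is correct and follows essentially the same route as the paper: both use the factorization $N_{AB}=W^{-1}e^{-F/x}L_{AB}\circ I_{AB}\,e^{F/x}W$, invoke the scattering ellipticity of Proposition~\ref{N_AB elliptic} on the constrained subspace to obtain an elliptic estimate without error term, and then pass to standard Sobolev norms on compact subsets. Your explicit mention of the parametrix remainder being absorbed by shrinking $c$ is exactly the ``localness'' mechanism the paper refers to when it says the elliptic estimates have no error terms (citing \cite[Section~2]{UV15}); the paper simply states this while you spell out the reason.
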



\section{Injectivity of the non-linear rigidity problems}

We prove the main rigidity results for both the local and global non-linear problems in this section.

\subsection{Lens rigidity}

We first prove the local magnetic lens rigidity and its application in the global case.
\medskip

\noindent{\em Proof of Theorem \ref{local lens rigid}.} Since $(c^2g_0,\Omega)$ and $(\tilde c^2g_0,\tilde\Omega)$ have the same lens data near $S_p(\p M)$, by Proposition \ref{identity from lens data}, $I_{AB}[\varphi,\Phi](\gamma)=0$ for magnetic geodesics $\gamma$ close enough to the ones tangent to $\p M$ at $p$, as in identity \eqref{identity6}, with $\varphi=d(\ln\frac{\tilde c}{c})$, $\Phi=Y-\tilde Y$. Then Proposition \ref{injectivity of I_AB} implies that $d(\ln\frac{\tilde c}{c})$ and $Y-\tilde Y$ vanish in $M$ near $p$. Since $c=\tilde c$ near $p$ on $\p M$, which implies that $\ln\frac{\tilde c}{c}(z)=0$ for $z\in \p M$ near $p$, then we have actually $\ln\frac{\tilde c}{c}(z)=0$ for $z\in M$ near $p$, i.e. $c=\tilde c$ in $M$ near $p$. On the other hand, $c=\tilde c$ and $Y=\tilde Y$ in $M$ near $p$ imply that $\Omega=\tilde\Omega$ there, which proves the theorem. \qed
\medskip

\noindent{\em Proof of Theorem \ref{global lens rigid}.} The proof is essentially using the layer stripping argument as in \cite{SUV16}, however, since $\p M$ is strictly magnetic convex, it is not necessary to be a level set of the foliation $f$, and $\p M\cap \big(M\setminus f^{-1}((a,b])\big)$ might not be empty. A similar case was considered in \cite{PSUZ16} for ordinary geodesics. 



It is not difficult to see that $f(M)\subset (-\infty,b]$ and $f^{-1}(b)\subset \p M$ (if not, $M\setminus f^{-1}((a,b])$ can not have empty interior). Let 
\begin{align*}
\tau:=\inf\{t\leq b: \,\supp (c-\tilde c) \cup \supp (\Omega-\tilde \Omega) \subset f^{-1}((-\infty,t])\}.
\end{align*}
From now on we denote $\supp (c-\tilde c) \cup \supp (\Omega-\tilde \Omega)$ by $K$. Since $\supp(c-\tilde c)$ and $\supp(\Omega-\tilde\Omega)$ are compact, the infimum actually can be reached, i.e. $K \subset f^{-1}((-\infty,\tau])$ and $K\cap f^{-1}(\tau)\neq \emptyset$.
We claim that $\tau\leq a$. 

First, since $\p M$ is strictly magnetic convex with respect to both magnetic systems, by Theorem \ref{local lens rigid} there exists an open neighborhood $U$ of $\p M$ in $M$ such that $K\subset M\setminus U$ (it also implies that $\tau<b$). Assume $\tau>a$, since $c=\tilde c$ and $\Omega=\tilde\Omega$ in $f^{-1}((\tau,b])$, by taking the limit we have that $c=\tilde c$ and $\Omega=\tilde\Omega$ on $f^{-1}(\tau)$ too. Thus $f^{-1}(t)$ is strictly magnetic convex with respect to both systems for $t\geq \tau$. Now let $p\in K\cap f^{-1}(\tau)$, since $K\cap \p M=\emptyset$, $p\notin \p M$, i.e. $p$ is an interior point. By convexity for $t\geq \tau$, there exists an open neighborhood $U_p$ of $p$ such that for any $(z,v)\in \p_+S\big(f^{-1}((-\infty,\tau])\big)$ sufficiently close to $S_p\big(f^{-1}(\tau)\big)$, $\gamma_{z,v}$ and $\tilde\gamma_{z,v}$ stay in $U_p\cup f^{-1}((\tau,b])$ until they hit $\p M$ in positive and negative finite times. Moreover, since $L=\tilde L$, $\ell=\tilde\ell$ and $(c^2g_0,\Omega)=(\tilde c^2g_0,\tilde\Omega)$ in $f^{-1}([\tau,b])$, it is easy to see that $\gamma_{z,v}$ and $\tilde\gamma_{z,v}$ exit $f^{-1}((-\infty,\tau])$ in the same time (denoted by $s$), and $(\gamma_{z,v}(s),\dot\gamma_{z,v}(s))=(\tilde\gamma_{z,v}(s),\dot{\tilde\gamma}_{z,v}(s))\in \p_-S\big(f^{-1}((-\infty,\tau])\big)$, i.e. the lens data of $(c^2g_0,\Omega)$ and $(\tilde c^2g_0,\tilde\Omega)$ on $\p_+S\big(f^{-1}((-\infty,\tau])\big)$ are equal near $S_p\big(f^{-1}(\tau)\big)$. Then Theorem \ref{local lens rigid} implies that $c=\tilde c$ and $\Omega=\tilde\Omega$ near $p$. We apply the result to all $p \in K\cap f^{-1}(\tau)$ (note it is closed) to conclude that there exist some $\kappa<\tau$ such that $K\subset f^{-1}((-\infty,\kappa])$, which is a contradiction, so $\tau\leq a$.

Thus $K\subset M\setminus f^{-1}((a,b])$ with empty interior, by smoothness this implies that actually $K=\emptyset$ and $c=\tilde c$, $\Omega=\tilde\Omega$ on $M$.
\qed

\subsection{Boundary rigidity with travel time data}

Now we prove the local boundary rigidity result with the help of the travel time data.
\medskip

\noindent{\em Proof of Theorem \ref{local boundary rigid}.} Since $\mathbb A=\tilde{\mathbb A}$ near $(p,p)$, by the argument of \cite[Lemma 2.1]{DPSU07}, $c=\tilde c$, $\iota^*\alpha=\iota^*\tilde\alpha$ on $\p M$ near $p$. Moreover, $\iota^*\Omega-\iota^*\tilde\Omega=\iota^*d(\iota^*\alpha-\iota^*\tilde\alpha)=0$. Then \cite[Lemma 2.5]{DPSU07} implies that $L=\tilde L$. Now as mentioned in Remark \ref{ell and T}, $T=\tilde T$ near $(p,p)$ is equivalent to $\ell=\tilde\ell$ near $S_p\p M$, note that $c=\tilde c$ on $\p M$ near $p$. Thus the uniqueness result is an immediate consequence of Theorem \ref{local lens rigid}. 
\qed


\section{Lens rigidity for general smooth curves} 

The method discussed in section 2 and 3 can be applied to the study of the lens rigidity problem for more general systems. We consider smooth parametrized curves $\gamma$, $|\dot{\gamma}|\neq 0$, satisfying the following equation
\begin{equation}\label{general curves 1}
\nabla_{\dot{\gamma}}\dot{\gamma}=G(\gamma, \dot{\gamma}),
\end{equation}
with $G(z, v)\in T_zM$ smooth on $TM$. $\gamma=\gamma_{z,v}$ depends smoothly on $(z,v)=(\gamma(0),\dot{\gamma}(0))$. We call the collection of such smooth curves, denoted by $\mathcal{G}$, {\it a general family of curves}. Note that if $G\equiv 0$, $\mathcal{G}$ is the set of usual geodesics; if $G$ is the Lorentz force corresponding to some magnetic field, then $\mathcal{G}$ consists of magnetic geodesics. It is easy to see that the generating vector field of $\mathcal G$ is
$$\mathfrak G(z,v)={\bf G}(z,v)+G^j(z,v)\frac{\p}{\p v^j}.$$
Generally curves $\gamma$ are not necessarily of constant speed (unless $\<G(\gamma,\dot{\gamma}),\dot{\gamma}\>=0$ along $\gamma$), and the dependence of $G$ on the second variable $v$ could be non-linear. 

Since $\gamma$ could be of non-unit speed, we should consider the scattering relation defined on $TM$, instead of $SM$. We define the following set 
$$\p_{\pm}TM\setminus 0:=\{(z,v)\in TM | z\in \p M, v\in T_zM, v\neq 0, \pm\<v,\nu(z)\>\geq 0\}.$$ 
For the sake of simplicity, we assume that any $\gamma\in \mathcal G$ satisfies the following property: if $(\gamma(s), \dot\gamma(s))\in \p_+TM\setminus 0$ for some $s$, then $|\dot\gamma(s)|=1$, i.e. $(\gamma(s),\dot\gamma(s))\in \p_+SM$. Thus we have the scattering relation with respect to $\mathcal G$
$$L: \p_+SM\to \p_-TM\setminus 0,$$ 
with $L(z,v)=(z',v')$ not necessarily belonging to $\p_-SM$. The travel time
$$\ell: \p_+SM\to \mathbb R\cup \infty,$$
$\ell(z,v)$ does not necessarily coincide with the length of $\gamma_{z,v}$ again.
Given $z\in \p M$, we say that $M$ (or $\partial M$) is {\it strictly convex} at $z$ with respect to $\mathcal{G}$ if
$$\Lambda(z,v)>\<G(z,v),\nu(z)\>_g$$ 
for any $v\in S_z\p M$. It is easy to see that the geometric meaning of our definition is similar to the usual convexity with respect to the metric (geodesics). It is also consistent with the definition of the magnetic convexity.

Now assume $M$ is strictly convex at $p\in \partial M$ with respect to $\mathcal{G}$, we consider the partial data lens rigidity problem near $p$ in a fixed conformal class. As mentioned above, generally $G(z,v)$ depends on $v$ non-linearly, it is not expectable that one can simultaneously recover both the conformal factor $c$ and the bundle map $G$. In this section, we assume that $G$ is given, we recover $c$ from the lens data. This is somehow similar to the case of the ordinary lens rigidity problem. 

Similar to the integral identity \eqref{identity5}, we have the following identity for general curves.

\begin{Proposition}
Given two systems $(c^2g_0,G)$ and $(\tilde c^2g_0,G)$, assume $L(\sigma)=\tilde L(\sigma)$, $\ell(\sigma)=\tilde\ell(\sigma)$ for some $\sigma\in \p_+SM$, then
\begin{equation}\label{general identity 1}
\int_0^{\ell(\sigma)}\frac{\p\tilde{\Xi^l}}{\p v^i}(\ell(\s)-s,X(s,\s))(2v^iv^j-g_0^{ij}|v|_{g_0}^2)\frac{\p h}{\p z^j}(X(s,\s))ds=0,
\end{equation}
where $h=ln(\tilde c/c)$.
\end{Proposition}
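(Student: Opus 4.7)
The plan is to mirror the magnetic derivation of identity \eqref{identity5}, exploiting the hypothesis that the two systems $(c^2g_0,G)$ and $(\tc^2 g_0,G)$ share the same bundle map $G$. I would begin by writing out the generating vector fields on $TM$. By \eqref{general curves 1} and the formula for $\mathfrak G$, the generator of the flow of $(c^2g_0,G)$ is
\begin{equation*}
V(z,v) = \Big(v,\ -\Gamma^i_{jk}(z)\,v^j v^k\,\tfrac{\p}{\p v^i} + G^i(z,v)\,\tfrac{\p}{\p v^i}\Big),
\end{equation*}
and $\tV$ is defined analogously but with the Christoffel symbols $\tilde\Gamma^i_{jk}$ of $\tc^2 g_0$ in place of $\Gamma^i_{jk}$; crucially, the term involving $G^i(z,v)$ is identical in $V$ and $\tV$. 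Therefore
\begin{equation*}
V(z,v) - \tV(z,v) = \Big(0,\ (\tilde\Gamma^i_{jk} - \Gamma^i_{jk})\,v^j v^k\,\tfrac{\p}{\p v^i}\Big),
\end{equation*}
so the horizontal ($z$-)component vanishes and the fibre component is purely quadratic in $v$, with no Lorentz-type correction surviving.

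Next, I would apply the integral identity \eqref{identity2} at time $t = \ell(\s) = \tl(\s)$. The hypothesis $L(\s) = \tL(\s)$ together with $\ell(\s) = \tl(\s)$ forces $X(\ell(\s),\s) = \tX(\ell(\s),\s)$ in $TM$, so the left-hand side of \eqref{identity2} vanishes. Projecting onto the last $n$ components of the identity yields
\begin{equation*}
\int_0^{\ell(\s)} \frac{\p \tilde\Xi^l}{\p v^i}\big(\ell(\s) - s,\, X(s,\s)\big)\, (\tilde\Gamma^i_{jk} - \Gamma^i_{jk})\big(X(s,\s)\big)\, v^j v^k\, ds = 0,\quad l = 1,\ldots,n,
\end{equation*}
where $v$ denotes the fibre part of $X(s,\s)$.

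Finally, I would substitute the conformal Christoffel difference computed immediately before \eqref{identity5}. With $h = \ln(\tc/c)$,
\begin{equation*}
(\tilde\Gamma^i_{jk} - \Gamma^i_{jk})\,v^j v^k = 2 v^i v^j\,\frac{\p h}{\p z^j} - g_0^{il}\,g_{0,jk}\,v^j v^k\,\frac{\p h}{\p z^l} = \big(2 v^i v^j - g_0^{ij}\,|v|_{g_0}^2\big)\,\frac{\p h}{\p z^j},
\end{equation*}
which is exactly the bracket appearing in \eqref{general identity 1}. The only substantive difference from the magnetic case is that curves in $\mathcal G$ need not have unit $g$-speed, so the scalar $|v|_{g_0}^2$ appears where \eqref{identity5} carried the constant $1/c^2$ valid along unit-speed magnetic geodesics.

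There is no real obstacle: the argument is a direct transcription of the magnetic derivation, in fact simplified by the disappearance of the first-order term in $V-\tV$ (thanks to $G$ being common to both systems). The one point deserving a moment of care is checking that the endpoint data $L(\s) = \tL(\s)$ and $\ell(\s) = \tl(\s)$, now read on $\p_- TM \setminus 0$ rather than $\p_- SM$, still imply $X(\ell(\s),\s) = \tX(\ell(\s),\s)$ as elements of $TM$; this is immediate from the definitions of $L$ and $\ell$ for the family $\mathcal G$ given just above the proposition.
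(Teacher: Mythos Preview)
Your proposal is correct and is precisely the argument the paper has in mind: the proposition is stated there as the direct analogue of \eqref{identity5}, obtained by running the derivation of Section~\ref{pseudo-linearization} with the $G$-term cancelling in $V-\tV$ and without the unit-speed simplification $|v|_{g_0}^2=1/c^2$. Your observation that the endpoint matching now takes place in $\p_-TM\setminus 0$ rather than $\p_-SM$ is the only new wrinkle, and you handle it correctly.
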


\noindent{\em Remark:} If one considers the general case where the bundle map is unknown, then the identity becomes
$$\int_0^{\ell(\sigma)}\frac{\p\tilde{\Xi^l}}{\p v^i}(\ell(\s)-s,X(s,\s))\Big((2v^iv^j-g_0^{ij}|v|_{g_0}^2)\frac{\p h}{\p z^j}+(G^i-\tilde G^i)\Big)(X(s,\s))ds=0,$$
with $G(z,v)=G^i(z,v)\frac{\p}{\p z^i},\,\tilde G(z,v)=\tilde G^i(z,v)\frac{\p}{\p z^i}$ for $(z,v)\in TM$.

Identity \eqref{general identity 1} induces the following weighted X-ray transform
\begin{equation}\label{general identity 2}
I_{AB}\varphi(\gamma)=\int A(\gamma(s),\dot{\gamma}(s))B(\gamma(s),\dot{\gamma}(s))\,\varphi(\gamma(s))\,ds=0,
\end{equation}
with invertible matrices $A$ and $B$. X-ray transforms along general smooth curves were considered in \cite{FSU08} and \cite[Appendix]{UV15}. Notice that $\gamma(s)$ does not necessarily have unit speed, by reparametrization, there exists a smooth positive function $\omega$ on $SM$ such that
\begin{equation}\label{general identity 3}
I_{AB}\varphi(\gamma)=\int A(\gamma(t),\omega\dot{\gamma}(t))B(\gamma(t),\omega\dot{\gamma}(t))\,\varphi(\gamma(t))\omega^{-1}(\gamma(t),\dot{\gamma}(t))\,dt=0,
\end{equation}
with $|\dot{\gamma}(t)|_g\equiv 1$.

Denote the product of the terms other than $\varphi$ in the integrand of \eqref{general identity 3} by $\mathcal W$, which is invertible on $SM$, thus
\begin{equation}\label{general identity 4}
I_{\mathcal W}\varphi(\gamma):=\int \mathcal W(\gamma(t),\dot{\gamma}(t))\,\varphi(\gamma(t))\,dt=0.
\end{equation}
Such matrix weighted X-ray transform was also considered in \cite{PSUZ16} for geodesic flows. By similar arguments as in section 3, we can prove that $I_{\mathcal W}$ is locally invertible. Thus the following weaker lens rigidity result holds for a general family of smooth curves.

\begin{Theorem}\label{general curves local lens rigid}
Let $n=$dim$M\geq 3$, let $c,\tc>0$ be smooth functions, $G:TM\to TM$ be a smooth bundle map and let $\p M$ be strictly convex with respect to both $(c^2g_0,G)$ and $(\tc^2g_0,G)$ near a fixed $p\in\p M$. Assume that on $\p M$ near $p$, $c,\tilde c$ has the same boundary jet. If $L=\tL$, $\ell=\tl$ near $S_p\p M$, then $c=\tc$ in $M$ near $p$.
\end{Theorem}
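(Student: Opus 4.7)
The plan is to mimic the strategy of Sections 2 and 3, specialized to the situation where only the conformal factor is unknown, so there is no tensor field $\Phi$ to deal with. First, using the hypothesis that $c$ and $\tc$ have the same boundary jet near $p$, I would smoothly extend $(c^2g_0,G)$ and $(\tc^2 g_0, G)$ across $\p M$ to an ambient manifold $\tM$ so that the two systems agree outside $M$ near $p$. Then identity \eqref{general identity 1} holds with $s$ integrated over all of $\R$; setting $h=\ln(\tc/c)$ and $\varphi = dh$, it becomes the weighted X-ray relation $I_{AB}\varphi(\g) = 0$ for curves $\g\in\mathcal{G}$ in $\tM$ close to those tangent to $\p M$ at $p$, with $A(z,v) = (\p\tilde\Xi^l/\p v^i)$ and $B(z,v) = 2v^iv^j - g_0^{ij}|v|_{g_0}^2$.

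Next, after the unit-speed reparametrization of \eqref{general identity 3}, I obtain the scalar-in-out matrix-weighted transform $I_{\mathcal W}\varphi = 0$ of \eqref{general identity 4} with $\mathcal W = AB\omega^{-1}$ on $SM$. The weight $\mathcal W$ is smooth and invertible near $S_p\p M$: $A$ reduces to the identity on $S_p\p M$ and hence is invertible nearby; $B$ is invertible by the linear-algebra computation of Section 2 (which used only $|v|_g=1$); and $\omega>0$. I would then transplant the construction of $N_{AB}$ from Subsection 3.1 to build a scattering pseudodifferential operator $N_{\mathcal W}\in \Psi^{-1,0}_{sc}(U)$ whose front-face Schwartz kernel collapses to the single block $\mathcal W^*\mathcal W$, the $2\times 2$ block structure of Lemma \ref{Schwartz kernel} degenerating because there is no $\Phi$-component.

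Ellipticity at both fiber infinity and finite points of ${}^{sc}T^*_{\p U}U$ is then substantially simpler than in Lemmas \ref{fiber infinity}--\ref{finite points}: no mixing between $\varphi$ and a $1$-$1$ tensor occurs, so the dimensional and evenness arguments collapse to the observation that
\begin{equation*}
(\sigma(N_{\mathcal W})\varphi,\varphi)\; \geq\; C\<(\xi,\eta)\>^{-1}\int|\mathcal W\varphi|^2,
\end{equation*}
which vanishes only when $\varphi=0$ by invertibility of $\mathcal W$. The Gaussian-then-approximation scheme of Lemma \ref{finite points} supplies a compactly supported $\chi$ making $N_{\mathcal W}$ elliptic in the relevant neighborhood, and the analog of Proposition \ref{injectivity of I_AB} delivers stable local invertibility: $I_{\mathcal W}\varphi = 0$ on $O_p$-local curves implies $\varphi = dh = 0$ in $O_p$. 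Combined with $h|_{\p M}=0$ (the $0$-jet part of the hypothesis), this integrates up to $h\equiv 0$ near $p$, i.e.\ $c=\tc$ in $M$ near $p$.

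The main point requiring genuine verification, rather than a rote transfer of the magnetic argument, is that the scattering-coordinate expansions \eqref{lambda omega}--\eqref{lambda omega prime} and the consequent front-face asymptotics were derived for the magnetic flow, whereas here the flow is only required to satisfy \eqref{general curves 1}. The structural inputs actually used in Section 3 are the strict convexity (built into the hypothesis) and the confinement estimate $|\l|\leq C\sqrt{x}\Rightarrow x'(t)\geq 0$ for $|t|<\delta_0$, which is exactly what is established for general families $\mathcal G$ in the appendix of \cite{UV15}. Once the analogs of \eqref{lambda omega}--\eqref{lambda omega prime} are written out for $\mathcal G$, the rest of the microlocal argument applies verbatim and the proof concludes as described.
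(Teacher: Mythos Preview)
Your proposal is correct and follows essentially the same route as the paper: reduce to the weighted X-ray identity \eqref{general identity 1}, reparametrize to unit speed to obtain $I_{\mathcal W}\varphi=0$ with invertible $\mathcal W$, and then invoke the Section~3 scattering-calculus argument (in its simpler, functions-only form) together with the general-curves appendix of \cite{UV15} to conclude $dh=0$ and hence $c=\tc$ near $p$. The paper's own proof is only a sketch that points to exactly these steps, so your write-up is in fact more detailed than what appears there.
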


A global result can be derived also under proper foliation conditions. Notice that in Theorem \ref{general curves local lens rigid} we assume that the boundary jets of $c$ and $\tilde c$ are equal. It is unclear that whether one can make the assumption weaker, for example just assuming $c=\tilde c$ on $\p M$ near $p$. The usual proof of the determination of boundary jets relies on the existence of some `distance' minimizing functional (not necessarily equalling the travel time) on $\p M\times \p M$ which satisfies an eikonal equation. However given a general system, the existence of such minimizing functional is unknown. One might want to first consider it on a general family of curves whose induced flow is Hamiltonian.



\begin{thebibliography}{aa}

\bibitem{Ai12} G. Ainsworth, {\it The attenuated magnetic ray transform on surfaces,} Inverse Problems and Imaging, {\bf 7} (2012), 27--46.

\bibitem{AS67} D. V. Anosov, Y. G. Sinai, \emph{Certain smooth ergodic systems} [Russian], Uspekhi Mat. Nauk, \textbf{22} (1967), 107--172.

\bibitem{Ar61} V. I. Arnold, \emph{Some remarks on flows of line elements and frames}, Sov. Math. Dokl., \textbf{2} (1961), 562--564.

\bibitem{AZ15} Y. Assylbekov, H. Zhou, {\it Boundary and scattering rigidity problems in the presence of a magnetic field and a potential,} Inverse Problems and Imaging {\bf 9} (2015) 935--950.

\bibitem{BZ14} G. Bao, H. Zhang, Stability for the lens rigidity problem, {\it arXiv:1401.1019}.


\bibitem{Cr91} C. Croke, {\it Rigidity and distance between boundary points,} J. Diff. Geom., {\bf 33} (1991), 445--464.

\bibitem{Cr04} C. Croke, Rigidity theorems in Riemannian geometry, in {\it Geometric Methods in Inverse Problems and PDE Control}, IMA Vol. Math. Appl., 137, Springer, New York, 2004, 47--72.

\bibitem{Cr05} C. Croke, {\it Boundary and lens rigidity of finite quotients,} Proc. Am. Math. Soc. {\bf 133} (2005), 3663--3668.

\bibitem{Cr14} C. Croke. {\it Scattering rigidity with trapped geodesics,} Ergodic Theory Dynam. Systems, {\bf 34} (2014), 826--836.

\bibitem{CK94} C. Croke, B. Kleiner, {\it Conjugacy and Rigidity for Manifolds with a Parallel Vector Field,} J. Diff. Geom. {\bf 39}(1994), 659--680.

\bibitem{Da06} N. S. Dairbekov, {\it Integral geometry problem for nontrapping manifolds,} Inverse Problems, {\bf 22} (2006), 431--445.

\bibitem{DPSU07} N. S. Dairbekov, G. P. Paternain, P. Stefanov, G. Uhlmann, \emph{The boundary rigidity problem in the presence of a magnetic field}, Adv. Math., {\bf 216} (2007), 535--609.

\bibitem{DU10} N. S. Dairbekov, G. Uhlmann, {\it Reconstructing the metric and magnetic field from the scattering relation,} Inverse Problems and Imaging, {\bf 4} (2010), 397--409.

\bibitem{Es86} J. H. Eschenburg, {\it Local convexity and nonnegative curvature- Gromov’s proof of the sphere theorem,} Invent. Math. {\bf 84} (1986), 507--522.

\bibitem{FSU08} B. Frigyik, P. Stefanov, G. Uhlmann, \emph{The X-ray transform for a generic family of curves and weights}, J. Geom. Anal. {\bf 18} (2008), 81--97.

\bibitem{GW76} R.E. Greene, H. Wu, {\it $C^{\infty}$ convex functions and manifolds of positive curvature,} Acta Math. {\bf 137} (1976), 209--245.


\bibitem{Gui14} C. Guillarmou, Lens rigidity for manifolds with hyperbolic trapped set, {\em arXiv:1412.1760}.

\bibitem{He1905} G. Herglotz, \emph{\"Uber die Elastizit\"at der Erde bei Ber\"ucksichtigung ihrer variablen Dichte}, Zeitschr. f\"ur Math. Phys., {\bf 52} (1905), 275-299.

\bibitem{Her12} P. Herreros, {\it Scattering boundary rigidity in the presence of a magnetic field,} Comm. Anal. Geom., {\bf 20} (2012), 501--528.

\bibitem{HV11} P. Herreros, J. Vargo, \emph{Scattering rigidity for analytic Riemannian manifolds with a possible magnetic field}, J. Geom. Anal. {\bf 21} (2011), no. 3, 641--664.

\bibitem{Ho13} S. Holman, {\it Generic local uniqueness and stability in polarization tomography,} J. Geom. Anal. {\bf 23} (2013), 229--269.

\bibitem{HS10} S. Holman, P. Stefanov, {\it The weighted Doppler transform}, Inverse Problems and Imaging, {\bf 4} (2010), 111--130.


\bibitem{K09} V. Krishnan. {\it A support theorem for the geodesic ray transform on functions,} J. Fourier Anal. Appl. {\bf 15} (2009), 515--520.

\bibitem{KS09} V. Krishnan, P. Stefanov, {\it A support theorem for the geodesic ray transform of symmetric tensor fields,} Inverse Problems and Imaging, {\bf 3} (2009), 453--464.

\bibitem{LSU03} M. Lassas, V. Sharafutdinov, G. Uhlmann, \emph{Semiglobal boundary rigidity for Riemannian metrics}. Math. Ann., 325 (2003), 767--793.

\bibitem{Mel94} R. B. Melrose, \emph{Spectral and scattering theory for the Laplacian on asymptotically Euclidean spaces}. Marcel Dekker, 1994.

\bibitem{Mi81} Michel, R., Sur la rigidit\'e impos\'ee par la longueur des g\'eod\'esiques, {\it Invent. Math.}, {\bf 65}(1981), 71--83.

\bibitem{Mo14} F. Monard, {\it Numerical implementation of geodesic X-ray transforms and their inversion,} SIAM J. Imaging Sciences, {\bf 7} (2014), 1335--1357.

\bibitem{MSU15} F. Monard, P. Stefanov, G. Uhlmann, {\it The geodesic ray transform on Riemannian surfaces with conjugate points}, Communications in Mathematical Physics, {\bf 337} (2015), 1491--1513.

\bibitem{Mu77} R. G. Mukhometov, \emph{The reconstruction problem of a two-dimensional Riemannian metric, and integral geometry (Russian)}, Dokl. Akad. Nauk SSSR {\bf 232} (1977), no.1, 32-35.

\bibitem{Mu81} R. G. Mukhometov, \emph{On a problem of reconstructing Riemannian metrics}, Siberian Math. J. {\bf 22}(3) (1981), 119--135.

\bibitem{MR78} R. G. Mukhometov, V. G. Romanov, \emph{On the problem of finding an isotropic Riemannian metric in an n-dimensional space (Russian)}, Dokl. Akad. Nauk SSSR {\bf 243} (1978), no. 1, 41-44.

\bibitem{PSU13} G. Paternain, M. Salo, G. Uhlmann, {\it Tensor tomography on simple surfaces,} Invent. Math. {\bf 193} (2013), 229--247.

\bibitem{PSU14} G. Paternain, M. Salo, G. Uhlmann, \emph{Tensor tomography: progress and challenges}, Chinese Annals of Math. Ser. B, {\bf 35} (2014), 399--427.

\bibitem{PSUZ16} G. P. Paternain, M. Salo, G. Uhlmann, H. Zhou, The geodesic X-ray transform with matrix weights, {\it preprint}.

\bibitem{PZ15} G. P. Paternain, H. Zhou, Invariant distributions and the geodesic ray transform, {\it arXiv:1511.04547}.

\bibitem{PS88} L. Pestov, V.~A. Sharafutdinov, {\it Integral geometry of tensor fields on a manifold of negative curvature,} Sibirsk. Mat. Zh., {\bf 29} (1988), 114--130.

\bibitem{PU04} L. Pestov, G. Uhlmann, {\it On Characterization of the Range and Inversion Formulas for the Geodesic X-ray Transform,} Int. Math. Res. Not., {\bf 80} (2004), 4331--4347.

\bibitem{PU05} L. Pestov, G. Uhlmann, {\it Two dimensional simple Riemannian manifolds with boundary are boundary distance rigid}, Annals of Math., {\bf 161}(2005), 1089-1106.

\bibitem{RS02} A. Ranjan and H. Shah. Convexity of spheres in a manifold without conjugate points. {\em Proc. Indian Acad. Sci. (Math. Sci.)}, {\bf 112} (2002),
595--599.

\bibitem{Sh94} V.~A. Sharafutdinov, {\em Integral geometry of tensor fields}, Inverse and Ill-posed Problems Series. VSP, Utrecht, 1994.

\bibitem{Sh07} V.~A. Sharafutdinov, {\it Variations of Dirichlet-to-Neumann map and deformation boundary rigidity of simple 2-manifolds,} J. Geom. Anal. {\bf 17} (2007), 147--187.

\bibitem{SU98} P. Stefanov, G. Uhlmann, \emph{Rigidity for metrics with the same lengths of geodesics}, Math. Res. Lett. {\bf 5} (1998), 83--96.

\bibitem{SU04} P. Stefanov, G. Uhlmann, {\it Stability estimates for the {X}-ray transform of tensor fields and boundary rigidity,} Duke Math. J., {\bf 123} (2004), 445--467.

\bibitem{SU05} P. Stefanov, G. Uhlmann, \emph{Boundary rigidity and stability for generic simple metrics}, J. Amer. Math. Soc. {\bf 18} (2005), 975-1003.

\bibitem{SU08a} P. Stefanov, G. Uhlmann, \emph{Boundary and lens rigidity, tensor tomography and analytic microlocal analysis}, in Algebraic Analysis of Differential Equations, Fetschrift in Honor of Takahiro Kawai, edited by T. Aoki, H. Majima, Y. Katei and N. Tose, pp. 275-293 (2008).

\bibitem{SU08b} P. Stefanov, G. Uhlmann, {\it Integral geometry of tensor fields on a class of non-simple {R}iemannian manifolds,} Amer. J. Math., {\bf 130} (2008), 239--268.

\bibitem{SU09} P. Stefanov, G. Uhlmann, \emph{Local lens rigidity with incomplete data for a class of non-simple Riemannian manifolds}, J. Differential Geom., {\bf 82} (2009), 383--409.

\bibitem{SU12} P. Stefanov, G. Uhlmann, {\it The geodesic {X}-ray transform with fold caustics,} Anal. PDE, {\bf 5} (2012), 219--260.

\bibitem{SUV16} P. Stefanov, G. Uhlmann, A. Vasy, \emph{Boundary rigidity with partial data}, J. Amer. Math. Soc. {\bf 29} (2016), 299--332.

\bibitem{SUV14} P. Stefanov, G. Uhlmann, A. Vasy, {\it Inverting the local geodesic X-ray transform on tensors}, to appear in Journal d'Analyse Mathematique.

\bibitem{UV15} G. Uhlmann, A. Vasy, \emph{The inverse problem for the local geodesic ray transform}, with an appendix by H. Zhou, Inventiones Mathematicae (2015) 10.1007/s00222-015-0631-7.

\bibitem{UZ16} G. Uhlmann, H. Zhou, Journey to the Center of the Earth, {\it arXiv:1604.00630}.

\bibitem{Va09} J. Vargo. {\it A proof of lens rigidity in the category of analytic metrics,} Math. Research Letters, {\bf 16} (2009), 1057--1069.

\bibitem{WZ1907} E. Wiechert, K. Zoeppritz, \emph{\"Uber Erdbebenwellen}, Nachr. Koenigl. Geselschaft Wiss, Goettingen, {\bf 4} (1907), 415-549.

\bibitem{ZZ16} H. Zhou, T. Zhou, Inverting the local magnetic ray transform of tensor fields, {\it preliminary version}.

\end{thebibliography}
\end{document}